\numberwithin{equation}{section}
\newtheorem{prop}{Proposition}[section]
\newtheorem{thm}[prop]{Theorem}
\newtheorem{lem}[prop]{Lemma}
\newtheorem{cor}[prop]{Corollary}
\newtheorem{rem}[prop]{Remark}
\newtheorem{defi}[prop]{Definition}
\renewcommand{\div}{\mbox{div}\,}
\renewcommand{\det}{\operatorname{det}}
\title[Local potential and H\"older estimates for LMA equations]{Local potential and H\"older estimates for the linearized Monge-Amp\`ere equation}
\author{Guoqing Cui}
\author{Ling Wang}
\author{Bin Zhou}
\address{School of Mathematical Sciences, Peking University, Beijing 100871, China}
\email{gqcui25@stu.pku.edu.cn}
\address{Department of Decision Sciences and BIDSA, Bocconi University, Milano, Italy}
\email{ling.wang@unibocconi.it}
\address{School of Mathematical Sciences, Peking University, Beijing 100871, China}
\email{bzhou@pku.edu.cn}
\begin{document}

\subjclass{35B65, 35J96, 31C45.}

\keywords{Potential estimates, interior H\"older regularity, linearized Monge-Amp\`ere equations}

\begin{abstract}
   In this paper, we establish local potential estimates and H\"older estimates for solutions of linearized Monge-Amp\`ere equations with the right-hand side being a signed measure, under suitable assumptions on the data. In particular, the interior H\"older estimate holds for an inhomogeneous linearized Monge-Amp\`ere equation with right-hand side being the nonnegative divergence of a bounded vector field in all dimensions. As an application, we give a new approach for the interior estimate of the singular Abreu equation.
\end{abstract}

\maketitle

\section{Introduction}
In this paper, we investigate local properties of weak solutions to the linearized Monge-Amp\`ere equations with signed measure data
\begin{equation}
   -\sum_{i,j=1}^{n}U^{ij}D_{ij}v=\mu \label{eq: div LMA} 
\end{equation}
in a bounded convex domain $\Omega\subset\mathbb{R}^n$ $(n\ge 2)$, where $U=(U^{ij})$ is the cofactor matrix of the Hessian matrix of a convex function $u\in C^2(\Omega)$, satisfying 
\begin{equation}
    0<\lambda\le \operatorname{det}D^2u\le \Lambda\quad\text{in }\Omega.\label{condition}
\end{equation} 
 
In a celebrated work, Caffarelli and Gutiérrez established the Harnack inequality and Hölder continuity for solutions to the homogeneous linearized Monge-Ampère equation under the $\mathcal{A}_{\infty}$ condition \cite{CG}. Notably, the $\mathcal{A}_{\infty}$ condition is satisfied when \eqref{condition} holds. For the inhomogeneous case in \eqref{eq: div LMA} with $\mu = f$, an integrable function, Hölder estimates, higher-order regularities, and boundary behaviors have been studied under suitable assumptions on $f$; see \cite{GN1, GN2, LN1, LN2, LS, TW2}. When the right-hand side has more singularity, the equations appear in many circumstances, including the dual semigeostrophic equations \cite{ACDF,ACDF14,BB}, the polar factorization \cite{Lo}  and the singular Abreu equations \cite{Le2}, etc.
One of the main purposes in this paper is to use a potential theoretic approach to investigate the local behavior of solutions to \eqref{eq: div LMA} with $\mu$ being a signed Radon measure. 

In the potential theory of linear and nonlinear equations, numerous important results have been established over the past several decades. For instance, for the $p$-Laplacian equation, the following two-sided pointwise Wolff potential estimate is well known: if $u$ is a nonnegative superharmonic function satisfying
$$
-\operatorname{div} (|Du|^{p-2}Du) = \mu \geq 0 \quad \text{in } B(x_0, 4r),
$$
then
$$
C_{1}\mathbf{W}_{1,p}^{\mu}(x_{0},r) + \inf_{B(x_0,2r)}u \leq u(x_{0}) \leq C_{2}\inf_{B(x_0,r)}u + C_{3}\mathbf{W}_{1,p}^{\mu}(x_{0},2r),
$$
where $C_1$, $C_2$, and $C_3$ are positive constants depending on $n$ and $p$, and $\mathbf{W}_{1,p}^{\mu}(x_{0},r)$ denotes the Wolff potential of $\mu$.
This two-sided estimate was first established by Kilpeläinen and Malý \cite{KM1, KM}, using a carefully constructed test function and a clever iteration technique. Later, Trudinger and Wang \cite{TW1} provided an alternative proof using Poisson modification techniques with Harnack’s inequality. The Wolff potential estimate for solutions to degenerate equations with signed measure data was derived by  \cite{DM} and \cite{TW09}. More recently, Hara \cite{Ha} presented a new proof of such estimates for solutions to degenerate and singular elliptic equations with signed measure data.

Since $U$ is divergence free, i.e. $\displaystyle\sum_{j=1}^nD_j(U^{ij})=0$ for all $i=1,2,\cdots ,n$, we may rewrite  \eqref{eq: div LMA} in the divergence form as follows:
\begin{equation}\label{eq: +- form}
     -D_j(U^{ij}D_iv)=\mu.
\end{equation}
Here and throughout, we adopt the Einstein summation convention over repeated indices. Inspired by Hara’s approach, we establish a potential estimate for the linearized Monge-Ampère equation with signed measure data in this paper. 
Given a signed Radon measure $\mu$, we apply the Hahn-Jordan decomposition to write 
\begin{equation*}
    \mu:=\mu_+ - \mu_- \quad \text{in } \Omega, \end{equation*}
where $\mu_+$ and $\mu_-$ are nonnegative Radon measures belonging to the dual space $(W_0^{1,2}(\Omega))^{\ast}$. We denote the total variation of $\mu$ by $$|\mu| := \mu_+ + \mu_-.$$
For a measurable set $A \subset \Omega$, we denote the average of a function $f$ over $A$ by
\[
\fint_{A} f\,\mathrm{d}x := \frac{1}{|A|} \int_{A} f\,\mathrm{d}x.
\]
We also define the \emph{section} of $u$ centered at $x \in \Omega$ with height $h > 0$ as
\[
S_u(x,h) := \left\{ y \in \Omega \,:\, u(y) < u(x) + Du(x) \cdot (y - x) + h \right\}.
\]
It is well-known that under assumption \eqref{condition}, sections are equivalent to Euclidean balls in terms of geometry and measure-theoretic properties.
Then we have the following potential estimate:

\begin{thm}[Local potential estimate]\label{thm:potential est}
    Let $u\in C^2(\Omega)$ be a convex function satisfying \eqref{condition}, $v$ be a weak solution of \eqref{eq: +- form}, $x_0$ be a Lebesgue point of $v$ and $S_u(x_0,2h_0)\Subset\Omega$. Then for any $p>0$, there exists $C>0$ depending only on $n,\lambda,\Lambda$ and $p$ such that \begin{equation}
        v_{\pm}(x_0)\le C\left(\fint_{S_u(x_0,h_0)\setminus\overline{S_u(x_0,h_0/2)}} v_{\pm}^{p} \,\mathrm{d}x\right)^{1/p}+CI_u^{\mu_{\pm}}(x_0,2h_0),\label{eq:potential est}
    \end{equation}where $v_{+}=\max\{v,0\}$, $v_{-}=\max\{-v,0\}$ and $I_u^{\mu_\pm}$ is the Riesz potential with respect to  $\mu_{\pm}$ (see \eqref{eq:Riesz potential}).
    \end{thm}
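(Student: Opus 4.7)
The plan is to run a dyadic Poisson-modification iteration in the spirit of Trudinger--Wang \cite{TW1} and Hara \cite{Ha}, adapted to the Monge--Amp\`ere geometry of sections. Since replacing $v$ by $-v$ exchanges $(\mu_+,v_+)\leftrightarrow(\mu_-,v_-)$ without changing the hypotheses, it is enough to establish \eqref{eq:potential est} for $v_+$. The key fact that makes the setup work is that under \eqref{condition} the sections $S_u(x_0,h)$ are comparable to Euclidean balls and enjoy a doubling/engulfing property, so dyadic scales in $h$ play the role of dyadic radii in the classical theory.

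The setup is as follows. Pick $h_k:=2^{-k}h_0$ and $S_k:=S_u(x_0,h_k)$, and on each $S_k$ introduce the Poisson modification $w_k$ defined as the unique weak solution of
\[
-D_j\bigl(U^{ij}D_iw_k\bigr)=0 \text{ in } S_k, \qquad w_k=v \text{ on }\partial S_k.
\]
Two ingredients are then needed at each scale. First, an $L^\infty$ control on the residual $v-w_k$, which satisfies the inhomogeneous equation with signed data $\mu$ and zero boundary values in $S_k$: an Alexandrov--Bakelman--Pucci / Green-function type bound for the linearized Monge--Amp\`ere operator (a consequence of the Caffarelli--Gutiérrez theory) yields $\sup_{S_k}(v-w_k)_+\le C\,\Phi_k^+$, where $\Phi_k^+$ is the natural scale-$k$ quantity built from $\mu_+(S_k)$ that sums dyadically into $I_u^{\mu_+}(x_0,2h_0)$. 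Second, a one-sided $L^p$-weak-Harnack estimate for the homogeneous solution $w_k$: the Harnack inequality of \cite{CG}, combined with a standard Krylov--Safonov / John--Nirenberg $L^p\to L^\infty$ upgrade valid for all $p>0$, gives
\[
(w_k)_+(x_0)\le C\Bigl(\fint_{S_k\setminus\overline{S_{k+1}}}(w_k)_+^p\,\mathrm{d}x\Bigr)^{1/p}
\]
with $C$ independent of $k$. Trading $w_k$ for $v$ on the annulus $S_k\setminus\overline{S_{k+1}}$ costs another residual term of size $\Phi_k^+$, producing the one-step inequality
\[
v_+(x_0)\le C\Bigl(\fint_{S_k\setminus\overline{S_{k+1}}} v_+^p\,\mathrm{d}x\Bigr)^{1/p}+C\Phi_k^+.
\]

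To close, I would iterate: at $k=0$ this is already \eqref{eq:potential est} with the annular average over $S_0\setminus\overline{S_1}$, up to absorbing deeper-scale averages by repeatedly reapplying the one-step estimate to $(v_+$-average$)$ terms on interior annuli and summing the dyadic measure contributions $\sum_{k\ge 0}\Phi_k^+\le CI_u^{\mu_+}(x_0,2h_0)$. The main obstacle is the bookkeeping of this iteration: one must pick the geometric ratio and the exchange between $w_k$ and $v$ carefully so that the constants at successive scales do not compound, and the signed character of $\mu$ has to be handled without losing the one-sided information on $v_+$. This is precisely where the Poisson-modification viewpoint (rather than the original Kilpel\"ainen--Mal\'y test function) is essential: $w_k$ depends only on the boundary trace of $v$ on $\partial S_k$, so it is insensitive to the sign of $\mu$ inside $S_k$; the sign enters only through the residual, whose positive part is cleanly bounded by $\Phi_k^+$ alone. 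The remaining technical point is verifying that Caffarelli--Gutiérrez's weak Harnack inequality is indeed available on sections in the $L^p$ form needed for arbitrary $p>0$, which under \eqref{condition} follows from the standard doubling and engulfing properties of sections.
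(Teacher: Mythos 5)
There is a genuine gap at the heart of your scheme: the claimed residual bound $\sup_{S_k}(v-w_k)_+\le C\,\Phi_k^+$ with $\Phi_k^+\sim h_k^{1-n/2}\mu_+(S_k)$ is false for measure data. An ABP/Green-function argument for the linearized Monge--Amp\`ere operator controls the sup of the residual by an $L^n$-type norm of the density, not by the total mass of $\mu_+$: the Green function of $U^{ij}D_{ij}$ on a section is unbounded near its pole, so $\int G_{S_k}(x,y)\,\mathrm{d}\mu_+(y)$ cannot be bounded by $Ch_k^{1-n/2}\mu_+(S_k)$. Indeed, if your one-step inequality
\begin{equation*}
v_+(x_0)\le C\Bigl(\fint_{S_k\setminus\overline{S_{k+1}}} v_+^p\,\mathrm{d}x\Bigr)^{1/p}+C h_k^{1-n/2}\mu_+(\overline{S_k})
\end{equation*}
held, then taking $k=0$ would already give a pointwise bound by a single mass term, which is strictly stronger than \eqref{eq:potential est}; a Dirac mass $\mu_+=\delta_{x_0}$ (for which $I_u^{\mu_+}(x_0,2h_0)=\infty$ and $v$ may blow up at $x_0$, while $h_0^{1-n/2}\mu_+(S_0)$ is finite) shows this cannot be true. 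What is actually available at each scale is only a weak-type bound: the residual (and more generally the modified function) is controlled in the Lorentz space $L^{\frac{n}{n-2},\infty}$ by $\mu_+(\overline{S_u(x_0,h_0)})$, via the Monge--Amp\`ere Sobolev inequality \eqref{eq:Sobolev} applied to truncations; this is exactly the content of Lemma \ref{lem:key lemma} in the paper. The whole purpose of the Kilpel\"ainen--Mal\'y level iteration in the paper's proof of Theorem \ref{thm:potential est} (the sequence $l_m$ defined through Lorentz quasi-norms on dyadic sections, together with the Lorentz--H\"older inequality \eqref{property 3: Holder ineq} and Lemma \ref{lem:sim of Riesz potential}) is to convert these scale-by-scale weak-type estimates into the pointwise Riesz potential bound at $x_0$; your plan tries to bypass this mechanism with an $L^\infty$ residual estimate that does not exist, and the vague ``bookkeeping'' step cannot repair it.

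A secondary structural difference worth noting: the paper's Poisson-type modification is not the homogeneous replacement on the full section, but the solution of $-D_j(U^{ij}D_iw)=-\mu_-$ on the annulus $S_u(x_0,h_0)\setminus\overline{S_u(x_0,h_0/2)}$ with $w=v$ outside. Keeping $-\mu_-$ in the modified problem is what yields the one-sided comparison $w\le v$ a.e.\ on the annulus and the flux bound \eqref{claim:upper bdd} by $\mu_+$ alone, which is how the signed measure is handled without contaminating the $v_+$ estimate by $\mu_-$. With your homogeneous $w_k$ on the full section, the exchange between $w_k$ and $v$ on the annulus again requires the unavailable sup bound on $v-w_k$, so this choice compounds the main gap rather than circumventing it.
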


\vskip 7pt

The potential estimate \eqref{eq:potential est} can be used to establish Hölder continuity for solutions of the non-homogeneous linearized Monge-Ampère equation. In particular, this estimate yields an $L^p$-$L^\infty$ bound, which plays a crucial role analogous to its counterpart in the De Giorgi-Nash-Moser theory for uniformly elliptic equations \cite{De,Na,Mo}. However, the classical De Giorgi-Nash-Moser approach does not apply directly in our setting due to the non-uniform ellipticity of the operator (see \cite[Remark 3.4]{TiW}). This is the main reason we employ the potential estimate. 



\begin{thm}[Interior H\"older estimate]\label{thm: Holder for signed data}
	Let $u\in C^2(\Omega)$ be a convex function satisfying \eqref{condition}. Suppose that there exist $M>0$ and $\varepsilon>0$ such that 
	\begin{equation}\label{eq: mu growth}
		|\mu|(S_u(x,h))\le M h^{\frac{n}{2}-1+\varepsilon}
	\end{equation} whenever $x\in \Omega$ such that $S_u(x,h)\subset\Omega$. Given a section $S_u(x_0,2h_0)\Subset\Omega$. Let $v$ be a solution to \eqref{eq: div LMA}
	in $S_u(x_0,2h_0)$ and $p\in(0,+\infty)$. Then there exist $\gamma\in(0, 1)$ depending only on $n$, $\varepsilon$, $\lambda$ and $\Lambda$, and  $C>0$ depending only on $n$, $p$, $\lambda$, $\Lambda$, $h_0$ and $\operatorname{diam}(\Omega)$, such that
	\begin{equation}\label{eq: holder under total variation}
		|v(x)-v(y)|\leq C\left(\|v\|_{L^p(S_u(x_0,2h_0))}+M\right)|x-y|^\gamma,\ \ 
		\forall x,y\in S_u(x_0,h_0).
	\end{equation}
\end{thm}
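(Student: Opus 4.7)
The plan is to prove Theorem~\ref{thm: Holder for signed data} by deriving a reduction-of-oscillation estimate from the potential estimate of Theorem~\ref{thm:potential est} and then iterating across geometrically decreasing scales.

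The first ingredient is a H\"older-type decay for the Riesz potential under the growth assumption~\eqref{eq: mu growth}. By direct integration against the weight defining $I_u^{|\mu|}$, the hypothesis $|\mu|(S_u(x,t)) \le M t^{n/2-1+\varepsilon}$ yields
\[
I_u^{|\mu|}(x, h) \le C M h^\varepsilon
\]
whenever $S_u(x,h) \subset \Omega$, converting the potential term in~\eqref{eq:potential est} into a H\"older-type correction. Combined with Theorem~\ref{thm:potential est} and a covering of $S_u(x_0,h/2)$ by annuli centered at varying points (using the equivalence of sections and Euclidean balls under~\eqref{condition}), this upgrades the pointwise bound~\eqref{eq:potential est} to the $L^p$--$L^\infty$ estimate
\[
\sup_{S_u(x_0,h/2)}|v| \le C\left(\fint_{S_u(x_0,h)} |v|^p\,\mathrm{d}x\right)^{1/p} + C M h^\varepsilon.
\]

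The heart of the proof is the oscillation reduction. Set $\omega(h) := \sup_{S_u(x_0,h)}v - \inf_{S_u(x_0,h)}v$ and $m_h := \inf_{S_u(x_0,h)}v$. After replacing $v$ by $-v$ if necessary, one may assume
\[
\bigl|\{v - m_h \le \omega(h)/2\} \cap S_u(x_0,h)\bigr| \ge \tfrac{1}{2}\,|S_u(x_0,h)|.
\]
Applying the $L^p$--$L^\infty$ estimate to the nonnegative function $w = v - m_h$ and splitting the integral via Chebyshev against the ``good set'' above, I expect to deduce
\[
\sup_{S_u(x_0,\theta h)} w \le \tau\,\omega(h) + C M h^\varepsilon
\]
for some $\theta,\tau\in(0,1)$ depending only on $n,\lambda,\Lambda$. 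The symmetric estimate for $M_h - v$ then yields $\omega(\theta h) \le \tau\,\omega(h) + C M h^\varepsilon$. A standard iteration lemma across the scales $h_k = \theta^k h_0$ converts this into $\omega(r) \le C(r/h_0)^\gamma(\omega(h_0) + M h_0^\varepsilon)$ for some $\gamma\in(0,1)$, while $\omega(h_0)$ itself is controlled by $\|v\|_{L^p(S_u(x_0,2h_0))} + M$ from the $L^p$--$L^\infty$ estimate. Passing from section-oscillation to Euclidean H\"older continuity via $|S_u(x,h)| \sim h^{n/2}$ produces~\eqref{eq: holder under total variation}.

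The main obstacle will be the oscillation reduction step. Since Theorem~\ref{thm:potential est} is an $L^p$--$L^\infty$ bound whose constant $C(p)$ can blow up as $p\to 0$, the naive Chebyshev computation need not by itself produce a strict contraction factor $\tau<1$. Overcoming this will likely require either a preliminary weak Harnack inequality---derived by iterating Theorem~\ref{thm:potential est} through a Krylov--Safonov-style covering of the ``good set''---or a direct De~Giorgi-type level-set iteration in which the correction $C M h^\varepsilon$ is absorbed at each level. Once such a quantitative decay factor is secured, the iteration and the change from sections to Euclidean distance are routine.
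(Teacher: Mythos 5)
There is a genuine gap, and it sits exactly where you yourself flag it: the oscillation-reduction step is not proved, and neither of your proposed repairs is available in this setting. Theorem~\ref{thm:potential est} is a one-sided, subsolution-type bound ($\sup$ controlled by an $L^p$ average plus a potential), with a constant $C>1$; applying it to $w=v-m_h$ and using Chebyshev against the ``good set'' of measure $\ge\tfrac12|S_u(x_0,h)|$ only gives $\sup w\lesssim \omega(h)+CMh^{\varepsilon}$, i.e.\ no contraction, as you note. The genuine content needed here is a measure-to-pointwise (critical density / weak Harnack) estimate for \emph{supersolutions with signed measure data}: since $\mu$ is signed, $v-m_h$ is not a supersolution of the homogeneous equation, so the Caffarelli--Guti\'errez weak Harnack inequality (Lemma~\ref{lem:weak Harnack}) cannot be invoked -- that is precisely what makes the nonnegative case (Theorem~\ref{thm:Holder by potential est for +}) work and the signed case different. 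Your suggested fixes do not close this: a weak Harnack inequality cannot be ``derived by iterating Theorem~\ref{thm:potential est} through a covering,'' because the potential estimate carries no infimum/lower-bound information (the paper proves only the upper half of the two-sided potential estimate), and a direct De Giorgi level-set iteration fails here for the reason recorded in the introduction (non-uniform ellipticity; cf.\ Remark~3.4 of Tian--Wang): the degenerate energy $\int U^{ij}D_ivD_jv$ admits the Sobolev inequality but not the isoperimetric-type lemma needed for the second De Giorgi step. So the heart of the theorem for signed $\mu$ is missing.

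For comparison, the paper avoids oscillation decay altogether in the signed case. It compares $v$ on each section $S_u(x_0,h)$ with the solution $w$ of the homogeneous Dirichlet problem, bounds the energy of $v-w$ by $\|v\|_{L^\infty}\,|\mu|(S_u(x_0,h))\le \|v\|_{L^\infty}Mh^{\frac n2-1+\varepsilon}$, proves a Caccioppoli-type decay for the energy of homogeneous solutions (Lemma~\ref{ite-lemma}, using the CG H\"older estimate and the cutoff $\zeta=2\rho-u$), and iterates (Lemma~\ref{lem:iteration}) to get $\int_{S_u(x_0,\rho)}U^{ij}D_ivD_jv\lesssim \rho^{\frac n2-1+\varepsilon}$. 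The Monge--Amp\`ere weak $(1,2)$-Poincar\'e inequality (Lemma~\ref{lem:Poincare}) then converts this into Campanato-type $L^1$ growth on sections, and the section-adapted Campanato embedding (Theorem~\ref{thm: Campanato}) gives the H\"older estimate; finally Corollary~\ref{cor:Lp to Linfty est} (which is the part of your plan that is correct, including $I_u^{|\mu|}(x,h)\le CMh^{\varepsilon}$) replaces $\|v\|_{L^\infty}$ by $\|v\|_{L^p}$. If you want to salvage your route, you would first have to establish a weak Harnack inequality for \eqref{eq: +- form} with an error term given by the Riesz potential of $\mu_-$, which is a substantial extension of Caffarelli--Guti\'errez not contained in, or easily deduced from, Theorem~\ref{thm:potential est}.
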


When $\mu$ is nonnegative, Theorem \ref{thm: Holder for signed data} follows directly by combining the potential estimate with the Harnack inequality of Caffarelli-Gutiérrez for the homogeneous equation (see Theorem \ref{thm:Holder by potential est for +}). 
To establish the result for a general signed measure $\mu$, we adopt a different approach based on Campanato space techniques and a weak $(1,2)$-Poincar\'e inequality adapted to the linearized Monge-Amp\`ere equation. These tools allow us to derive the desired $L^{\infty}$-$C^{\alpha}$ estimate.

\begin{rem}
    When the right-hand side $\mu=\mu_f:=f\in L^{q}(\Omega)$ with $q>\frac{n}{2}$, we can verify that  
    \begin{align}\label{eq: f-growth}
        |\mu_f|(S_u(x,h))&=\int_{S_u(x,h)}|f|\,\mathrm{d}x\leq \|f\|_{L^{q}(\Omega)}|S_u(x,h)|^{1-\frac{1}{q}} \leq C\|f\|_{L^{q}(\Omega)}h^{\frac{n}{2}-1+\varepsilon},
    \end{align}
    where $\varepsilon=1-\frac{n}{2q}>0$. Consequently, we recover the interior H\"older regularity of the solution $v$ to linearized Monge-Amp\`ere equation with $L^{\frac{n}{2}+}$-inhomogeneity, which was first proved by Le-Ngyuen using Green's function \cite{LN2}.
\end{rem}

As a direct application of Theorem~\ref{thm: Holder for signed data}, we may obtain a Hölder estimate for solutions to the linearized Monge-Ampère equation when the right-hand side measure $\mu$ is given by $\operatorname{div} \mathbf{F} + f$, where $\mathbf{F} := (F^1(x), \dots, F^n(x)) : \Omega \to \mathbb{R}^n$ is a vector field with $\operatorname{div }\mathbf{F}\ge 0$ and $f: \Omega\to\mathbb R$ is a function. The precise statement is as follows.

\begin{thm}[Interior H\"older estimate with right-hand side in divergence form]\label{thm:divF+f}
    Let $u\in C^2(\Omega)$ be a convex function satisfying \eqref{condition}. Let $v$  be a solution to
    \begin{equation}
   -U^{ij}D_{ij}v=\operatorname{div}\mathbf{F}+f, \label{eq: div LMA-Ff} 
\end{equation}
     where $\mathbf{F}\in L^{\infty}(\Omega;\mathbb{R}^n)$ with $\operatorname{div }\mathbf{F}\ge 0$ and $f\in L^{q}(\Omega)$ with $q>n/2$. Let $p\in (0,+\infty)$ and $S_u(x_0,2h_0)\Subset\Omega$. Then there exist $\gamma>0$ depending only on $n$, $\lambda$, $\Lambda$, $p$, and $q$, and constant $C>0$ depending only on $n$, $p$, $q$, $\lambda$, $\Lambda$, $h_0$ and $\operatorname{diam}(\Omega)$, such that 
        \begin{equation}\label{eq: holder div F f}
        |v(x)-v(x_0)|\leq C\left(\|v\|_{L^p(S_u(x_0,2h_0))}+\|\mathbf{F}\|_{L^{\infty}(\Omega)}+\|f\|_{L^{q}(\Omega)}\right)|x-x_0|^\gamma,
    \end{equation}
     for all $x\in S_u(x_0,h_0)$.
\end{thm}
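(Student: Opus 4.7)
The plan is to set $\mu := \operatorname{div}\mathbf{F} + f$ and apply Theorem~\ref{thm: Holder for signed data} directly; the content of the proof reduces to verifying the growth hypothesis \eqref{eq: mu growth} for $|\mu|$. Since $\mathbf{F}\in L^\infty(\Omega;\mathbb{R}^n)$, the distribution $\operatorname{div}\mathbf{F}$ extends to a linear functional on $W_0^{1,2}(\Omega)$ via $\langle\operatorname{div}\mathbf{F},\phi\rangle = -\int_\Omega \mathbf{F}\cdot\nabla\phi\,\mathrm{d}x$, and the hypothesis $\operatorname{div}\mathbf{F}\ge 0$ upgrades it to a nonnegative Radon measure. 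Writing $f = f_+ - f_-$, the Hahn--Jordan parts of $\mu$ then satisfy $\mu_+\le \operatorname{div}\mathbf{F} + f_+$ and $\mu_-\le f_-$, so
\[
|\mu|(S_u(x,h)) \;\le\; \operatorname{div}\mathbf{F}(S_u(x,h)) + \int_{S_u(x,h)}|f|\,\mathrm{d}x.
\]

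The $f$-contribution is precisely \eqref{eq: f-growth} from the remark after Theorem~\ref{thm: Holder for signed data}: H\"older's inequality together with $|S_u(x,h)|\le Ch^{n/2}$ gives $\int_{S_u(x,h)}|f|\,\mathrm{d}x \le C\|f\|_{L^q}h^{\frac{n}{2} - 1 + \varepsilon_f}$ with $\varepsilon_f := 1 - \tfrac{n}{2q} > 0$. For the divergence contribution, the idea is to control $\operatorname{div}\mathbf{F}(S_u(x,h))$ by the perimeter of the section. I would introduce the height function $w(y) := u(y) - u(x) - Du(x)\cdot(y-x)$, fix a smooth cutoff $\chi\colon\mathbb{R}\to[0,1]$ with $\chi\equiv 1$ on $(-\infty,0]$ and $\chi\equiv 0$ on $[1,\infty)$, and set $\phi_\eta(y) := \chi\bigl((w(y)-h)/\eta\bigr)$ for small $\eta>0$ with $S_u(x,h+\eta)\Subset\Omega$. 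Since $0\le\phi_\eta\le 1$, $\phi_\eta\equiv 1$ on $S_u(x,h)$, and $\operatorname{div}\mathbf{F}\ge 0$,
\[
\operatorname{div}\mathbf{F}(S_u(x,h)) \;\le\; \int_\Omega \phi_\eta\,\mathrm{d}(\operatorname{div}\mathbf{F}) \;=\; -\int_\Omega \mathbf{F}\cdot\nabla\phi_\eta\,\mathrm{d}x \;\le\; \|\mathbf{F}\|_{L^\infty}\int_\Omega|\nabla\phi_\eta|\,\mathrm{d}x.
\]
A coarea computation using $\nabla\phi_\eta = \eta^{-1}\chi'((w-h)/\eta)\nabla w$ gives $\int|\nabla\phi_\eta|\,\mathrm{d}x \to \mathcal{H}^{n-1}(\partial S_u(x,h))$ as $\eta\to 0^+$, and the comparability of sections to Euclidean balls under \eqref{condition} yields $\mathcal{H}^{n-1}(\partial S_u(x,h))\le Ch^{(n-1)/2}$ by monotonicity of perimeter under convex inclusion in an enclosing ball of radius $\sim h^{1/2}$. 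Hence $\operatorname{div}\mathbf{F}(S_u(x,h))\le C\|\mathbf{F}\|_{L^\infty}h^{(n-1)/2}$.

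Since $\tfrac{n-1}{2} = \tfrac{n}{2} - 1 + \tfrac{1}{2}$, setting $\varepsilon := \min\{\tfrac{1}{2},\varepsilon_f\}>0$ yields
\[
|\mu|(S_u(x,h)) \;\le\; C\bigl(\|\mathbf{F}\|_{L^\infty(\Omega)} + \|f\|_{L^q(\Omega)}\bigr)\,h^{\frac{n}{2} - 1 + \varepsilon}
\]
for every section $S_u(x,h)\subset\Omega$, and Theorem~\ref{thm: Holder for signed data} with $M := C(\|\mathbf{F}\|_{L^\infty}+\|f\|_{L^q})$ yields \eqref{eq: holder div F f}. The main technical point is the perimeter bound for $\partial S_u(x,h)$, which hinges on the comparability of sections to Euclidean balls provided by \eqref{condition}; once that is granted, the argument amounts to Hahn--Jordan decomposition, an integration by parts against the height-function cutoff $\phi_\eta$, and an appeal to Theorem~\ref{thm: Holder for signed data}.
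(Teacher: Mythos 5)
Your overall strategy is the same as the paper's: reduce to Theorem \ref{thm: Holder for signed data} by verifying the growth condition \eqref{eq: mu growth}, treat the $f$-part by H\"older as in \eqref{eq: f-growth}, and exploit $\operatorname{div}\mathbf{F}\ge 0$ so that the mass of $\operatorname{div}\mathbf{F}$ on a section is controlled by $\|\mathbf{F}\|_{L^\infty}$ times the surface area of the section (your mollified cutoff $\phi_\eta$ is an acceptable substitute for the Gauss--Green theorem for bounded divergence-measure fields that the paper invokes, provided you pass to the limit using monotonicity of perimeter under inclusion of convex bodies). However, there is a genuine gap at the key quantitative step: you claim $\mathcal{H}^{n-1}(\partial S_u(x,h))\le Ch^{(n-1)/2}$ ``by monotonicity of perimeter under convex inclusion in an enclosing ball of radius $\sim h^{1/2}$.'' Under \eqref{condition} alone there is no such enclosing ball with a constant depending only on the admissible quantities: sections are comparable to ellipsoids (John's lemma) whose eccentricity is not uniformly bounded as $h\to 0$; the volume bound $|S_u(x,h)|\le Ch^{n/2}$ says nothing about the perimeter, and the only uniform outer inclusion available comes from the strict-convexity modulus, which gives a ball of radius $Ch^{\delta}$ with $\delta$ possibly much smaller than $1/2$, hence a perimeter bound with exponent $\delta(n-1)$ that need not exceed $\frac{n}{2}-1$. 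For instance, $u=\varepsilon^{-1}x_1^2+\varepsilon x_2^2$ has $\det D^2u=4$ but $|\partial S_u(0,h)|\sim\varepsilon^{-1/2}h^{1/2}$, so the exponent $\frac{n-1}{2}$ can only be attained with constants degenerating with the eccentricity, which your argument does not control.

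The paper closes exactly this point differently, and this is the fix you need: by the interior $C^{1,\alpha}$ estimate for the Monge--Amp\`ere equation (\cite[Remark 5.23]{Le24}) every interior section contains an inner ball, $S_u(x,h)\supset B_{ch^{1/(1+\alpha)}}(x)$ with $c$ depending on $n,\lambda,\Lambda,h_0,\operatorname{diam}(\Omega)$, and then the elementary convexity inequality of Lemma \ref{lem: area est}, $|\partial X|\le n|X|/r$, yields
\begin{equation*}
|\partial S_u(x,h)|\le \frac{n|S_u(x,h)|}{c\,h^{\frac{1}{1+\alpha}}}\le C h^{\frac{n}{2}-1+\frac{\alpha}{1+\alpha}},
\end{equation*}
which is weaker than your claimed $h^{\frac{n-1}{2}}$ but still gives \eqref{eq: mu growth} with $\varepsilon=\min\{\frac{\alpha}{1+\alpha},\,1-\frac{n}{2q}\}>0$, after which the appeal to Theorem \ref{thm: Holder for signed data} proceeds exactly as you propose. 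So the architecture of your proof is correct, but the perimeter estimate as stated is unjustified (and, with the claimed uniform constant, false in general); replacing it by the inradius argument above repairs the proof and only changes the value of $\varepsilon$ (hence of $\gamma$).
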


\begin{rem}
This corollary is analogous to the De Giorgi-Nash-Moser theory for general divergence-form uniformly elliptic equations. The estimate was previously obtained in \cite{Lo} under the assumption that $\det D^2 u$ is sufficiently close to a constant, and in \cite{Le1} for the two-dimensional case. The H\"older regularity for \eqref{eq: div LMA-Ff} under the structural condition \eqref{condition} in higher dimensions has also been established assuming additional integrability conditions on $D^2u$ or $(D^2u)^{\frac{1}{2}}\mathbf{F}$; see \cite{Ki, W}. Our result on H\"older regularity for \eqref{eq: div LMA-Ff} in Theorem~\ref{thm:divF+f} needs the non-negativity of the $\operatorname{div }\mathbf{F}$ in all dimensions. So far, we haven't come up with a way to deal with general $\mathbf{F}$.
\end{rem}

Finally, we remark that equations of the form \eqref{eq: div LMA-Ff} arise from singular Abreu equations \cite{KLWZ, Le2, LZ}, which appear in the study of convex functionals with a convexity constraint related to the Rochet–Chon'e model for the monopolist problem in economics. These equations provide an important analytical framework for understanding variational problems involving convex potentials and have recently attracted considerable attention.
In particular, in \cite{KLWZ}, Kim, Le, together with the second and third authors, established the regularity of a class of singular Abreu equations by transforming the original fourth-order equation into a linearized Monge–Amp\`ere equation with a drift term. This transformation makes it possible to apply the techniques developed for degenerate elliptic equations to the study of these highly nonlinear problems.
Theorem~\ref{thm:divF+f} offers a new and more direct approach to obtaining interior regularity results for singular Abreu equations. It provides an alternative viewpoint that complements the method in \cite{KLWZ}, and the main idea of this approach will be briefly outlined in the final section.

Another source of equations of the form \eqref{eq: div LMA-Ff} is the study of semigeostrophic equations \cite{ACDF, Le1, Lo}.   In particular, the  interior H\"older estimates for the time derivatives of solutions to the dual semigeostrophic equations in dimension two with the initial potential density is bounded away from zero and infinity was settled by Le \cite{Le1}, through the study on \eqref{eq: div LMA-Ff}. The three dimensional case of this problem is interesting and remains open. 
Unfortunately, Theorem \ref{thm:divF+f} cannot be directly applied due to the restriction on the sign of $\operatorname{div }\mathbf{F}$.

The rest of the paper is organized as follows. In Section~\ref{sec:pre}, we collect some useful definitions and lemmas that will be used later. Section~\ref{sec:potential} is devoted to proving the potential estimate \eqref{eq:potential est} stated in Theorem~\ref{thm:potential est}. The Hölder regularity result and the proof of Theorem~\ref{thm:divF+f} are presented in Section~\ref{sec:Holder1}. Finally, in the last section, we apply Theorem~\ref{thm:divF+f} to the regularity of the singular Abreu equation.

\vskip 7pt

\section{Preliminaries}\label{sec:pre}

\vskip 7pt

In this section, we collect some fundamental results that will be used in the subsequent sections. Some of these results can be found in the existing literature or are straightforward to prove. Throughout this section, we assume that $\Omega$ is a bounded domain in $\mathbb{R}^n$ with smooth boundary, and that $u \in C^2(\Omega)$ is a convex function satisfying \eqref{condition}.

\subsection{Lorentz spaces} Firstly, we recall the definition of Lorentz spaces.
\begin{defi}
    For any $0<p,q\leq \infty$, we define 
        $$L^{p,q}(\Omega):=\{f:\Omega\to\mathbb{R} \text{ measurable}: \Vert f\Vert_{L^{p,q}(\Omega)}<\infty\}.$$
where
    \[\Vert f\Vert_{L^{p, q}(\Omega)}:=\left\{\begin{aligned}
    &p^{\frac{1}{q}}\left(\displaystyle\int_0^{\infty}t^q\left|\{x\in\Omega: |f(x)|\ge t\}\right|^{q/p}\frac{\mathrm{d}t}{t}\right)^{\frac{1}{q}}, && q<\infty, \\[7pt]
    &\sup_{t>0}\left\{ t\left|\{x\in\Omega: |f(x)|\ge t\}\right|^{1/p}\right\},&& q=\infty.
    \end{aligned}\right.\]
\end{defi}

\vskip 10pt
We remark that when $p<\infty$, $q=\infty$, the Lorentz space is the weak-$L^p$ space, and $L^{p,p}(\Omega)=L^{p}(\Omega)$ for $0<p\leq \infty$.

The following are some useful properties that will be used in the subsequent sections.
\begin{enumerate}
\item [(i)] If $0\le f\le g$ in $\Omega$, we have 
\begin{equation}
    \Vert f\Vert_{L^{p, q}(\Omega)}\le \Vert g\Vert_{L^{p,q}(\Omega)}.\label{property 1}
\end{equation} 

\item [(ii)]  (quasi-triangle inequality) For $0<p,q<\infty$, $f, g\in L^{p,q}(\Omega)$, $f+g\in L^{p,q}(\Omega)$, and there exists $C_{p,q}=2^{1/p}\max\{1,2^{1/q-1}\}>0$ such that
\begin{equation}
     \Vert f+g\Vert_{L^{p,q}(\Omega)}\le C_{p,q} ( \Vert f\Vert_{L^{p,q}(\Omega)}+  \Vert g\Vert_{L^{p,q}(\Omega)}).\label{property 2}
\end{equation} 
    
   \item [(iii)] (H\"older-type inequality) For $1\leq p,  p_1, p_2<\infty$ and $1<q, q_1, q_2\leq\infty$ satisfying 
    \[\frac{1}{p_1}+\frac{1}{p_2}=\frac{1}{p}, \ \ \frac{1}{q_1}+\frac{1}{q_2}=\frac{1}{q},\] there exists $C_{p_1, p_2, q_1, q_2}>0$, such that 
    \begin{equation}
    \Vert fg\Vert_{L^{p,q}(\Omega)}\le C_{p_1, p_2, q_1, q_2}\Vert f\Vert_{L^{p_1,q_1}(\Omega)}\cdot\Vert g\Vert_{L^{p_2,q_2}(\Omega)}.\label{property 3: Holder ineq}
    \end{equation}
\end{enumerate}
    
\subsection{Monge-Amp\`ere Sobolev inequality}
Consider the $L^2$-norm $\|Dv\|_u$ defined by
$$
\|Dv\|_u := \left( \int_{\Omega} U^{ij} D_iv D_jv\,\mathrm{d}x \right)^{1/2},
$$
where $(U^{ij})$ is the cofactor matrix of the Hessian $D^2u$ for a convex potential $u$. This norm includes the classical case as a special example: when $u(x) = \frac{1}{2}|x|^2$, we have $D^2u = I_n$ and hence $U^{ij} = I_n$. The following Monge-Amp\`ere Sobolev inequality was established by Tian and Wang \cite{TiW} for $n \ge 3$ and by Le \cite{Le1} for $n = 2$ (see also \cite{Ma1} for some extensions and \cite{WZ24} for a complex version).
\begin{lem}[Monge-Amp\`ere Sobolev inequality]\label{lem:Sobolev inequality}
Let $u\in C^2(\Omega)$ be a convex function satisfying \eqref{condition}.
    Let $$p=\frac{2n}{n-2}\quad \text{if } n\ge 3\quad \text{and}\quad p\in (2,\infty)\quad \text{if }n=2.$$ There exists $C>0$ depending only on $n$, $p$, $\lambda$, $\Lambda$ such that for any $v\in W_0^{1,2}(\Omega)$, 
    \begin{equation*}
        \Vert v\Vert_{L^{p}(\Omega)}\le C \Vert Dv\Vert_{u}.
    \end{equation*}
\end{lem}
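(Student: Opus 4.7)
My plan is to follow Tian-Wang in dimension $n \ge 3$ and Le in dimension $n = 2$: reduce to a local inequality on a single section of $u$ by a covering argument, affinely normalize the section to bounded scale, and close the estimate via a duality argument combined with Caffarelli's $W^{2, 1+\varepsilon}$ regularity for the Monge-Amp\`ere equation.

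By density it suffices to prove the inequality for $v \in C_c^\infty(\Omega)$. The global estimate will follow from a local one on a section $S_u(x_0, h) \Subset \Omega$ by a Besicovitch-type covering of $\{v \neq 0\}$ with bounded overlap, which is available because \eqref{condition} guarantees doubling of the Monge-Amp\`ere measure $\det D^2 u\, dx$ and the engulfing property of sections.

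On a fixed section, I would invoke John's lemma to produce an affine map $T$ with $B_{c_1} \subset T(S_u(x_0, h) - x_0) \subset B_{c_2}$ for constants $c_1, c_2$ depending only on $n, \lambda, \Lambda$, and set $\tilde u(y) := (\det T)^{2/n}[u(x_0 + T^{-1} y) - \text{affine}(y) - h]$ together with $\tilde v(y) := v(x_0 + T^{-1} y)$. Then $\tilde u$ still satisfies the pinching \eqref{condition} with the same constants, the cofactor matrix $\tilde U$ transforms covariantly with $T$, and a direct change-of-variables computation identifies $\|v\|_{L^p(S_u(x_0, h))}$ and $\|Dv\|_u$ on $S_u(x_0, h)$ with their normalized counterparts $\|\tilde v\|_{L^p}$ and $\|D\tilde v\|_{\tilde u}$, up to powers of $h$ that exactly cancel when $p = 2n/(n-2)$ (and up to a manageable loss absorbed by the free parameter $p$ when $n = 2$).

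It then remains to prove the Sobolev inequality on the unit-scale normalized domain. For the dual exponent $q = p/(p-1)$ I would solve $\tilde U^{ij} D_{ij} w = g$ with $\|g\|_{L^q} \le 1$ and zero boundary data, invoke Caffarelli's $W^{2, 1+\varepsilon}$ regularity for the Monge-Amp\`ere equation to control $w$, and then integrate by parts using $\partial_j \tilde U^{ij} = 0$ to dualize the pairing $\int \tilde v g$ into a bound in terms of $\|D\tilde v\|_{\tilde u}$. The main obstacle is the degeneracy of $\tilde U$: although its determinant is pinched, its individual eigenvalues are not, so the classical Sobolev inequality cannot be applied by a naive eigenvalue comparison. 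Overcoming this degeneracy is where the Monge-Amp\`ere structure must enter in an essential way --- through the $W^{2, 1+\varepsilon}$ estimate, or through Caffarelli's $C^{1,\alpha}$ regularity for $u$ --- and this is the heart of the Tian-Wang argument. The two-dimensional case is more delicate because the scaling-critical exponent is infinite; here one exploits the flexibility $p \in (2, \infty)$ together with a Moser-type iteration adapted to the linearized Monge-Amp\`ere operator, as in Le \cite{Le1}.
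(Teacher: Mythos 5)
First, note that the paper does not prove this lemma at all: it is quoted directly from Tian--Wang \cite{TiW} for $n\ge 3$ and Le \cite{Le1} for $n=2$ (with \cite{Ma1} for the doubling-measure extension), so there is no internal argument to compare with; the only question is whether your sketch would itself constitute a proof. It would not, as it stands. Your affine-invariance computation is correct (the exponent $\frac1p+\frac1n-\frac12$ vanishes exactly at $p=\frac{2n}{n-2}$), but the two load-bearing steps are left open. The local-to-global reduction is not a naive bounded-overlap covering: for $v\in W_0^{1,2}(\Omega)$ restricted to an interior section, $v$ does not vanish on the boundary of that section, so the local inequality you would sum is a Sobolev--Poincar\'e inequality with averages, and one needs a chaining/telescoping argument in the Monge--Amp\`ere quasi-metric structure (this is precisely the content of Maldonado's approach) rather than the one-line summation you indicate. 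More importantly, you explicitly defer the core estimate on the normalized section (``this is the heart of the Tian--Wang argument''), so the proposal acknowledges rather than supplies the essential idea.

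Moreover, the duality mechanism you do sketch appears circular. Solving $\tilde U^{ij}D_{ij}w=g$ with $\|g\|_{L^q}\le 1$, $q=\frac{p}{p-1}=\frac{2n}{n+2}$, and integrating by parts gives $\int \tilde v\,g \le \|D\tilde v\|_{\tilde u}\,\|Dw\|_{\tilde u}$, so you must bound the energy $\|Dw\|_{\tilde u}$. Testing the equation with $w$ itself yields $\|Dw\|_{\tilde u}^2\le \|w\|_{L^p}\|g\|_{L^q}$, and to control $\|w\|_{L^p}$ by $\|Dw\|_{\tilde u}$ you need exactly the Sobolev inequality being proved; the alternative of an a priori $L^\infty$ or $L^p$ bound for the linearized equation with $L^q$ data is unavailable here, since $q=\frac{2n}{n+2}<\frac n2$ for $n>2$, whereas the Caffarelli--Guti\'errez/Le--Nguyen estimates require integrability above $\frac n2$. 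Finally, Caffarelli's $W^{2,1+\varepsilon}$ estimate concerns the potential $u$ itself, not solutions $w$ of the linearized equation, so invoking it ``to control $w$'' does not close the argument (and, historically, it postdates \cite{TiW}, whose proof does not proceed this way). In short: the normalization and scaling bookkeeping are fine, but the covering step is glossed over and the central degeneracy estimate is missing, with the proposed dual route failing as described.
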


With the fact $L^p(\Omega)\subset L^{p,\infty}(\Omega)$, we immediately have
 \begin{equation}
        \Vert v\Vert_{L^{p,\infty}(\Omega)}\le C \Vert Dv\Vert_{u}.\label{eq:Sobolev}
    \end{equation}

We will also need the following weak (1,2)-type Poincar\'e inequality proved by \cite[Theorem 1.3]{Ma2}. For $x_0\in \Omega$ and $h>0$ with $S_u(x, h)\subset\Omega$, we denote
\[v_{x_0,h}:=\displaystyle\fint_{S_u(x_0,h)}v\,\mathrm{d}x.\]
\begin{lem}[Monge-Amp\`ere Poincar\'e inequality]\label{lem:Poincare}
	Let $\Omega$ be a bounded domain in $\mathbb{R}^n$ with smooth boundary and $u\in C^2(\Omega)$ be a convex function satisfying \eqref{condition}. There exists $C>0$ depending only on $n$, $\lambda$ and $\Lambda$ such that for any $v\in W^{1,2}(\Omega)$ and $S_u(x_0,h)\subset\Omega$, 
	\begin{eqnarray*}
		\fint_{S_u(x_0,h)}\left|v-v_{x_0,h}\right|\,\mathrm{d}x\le Ch^{1/2}\bigg(\fint_{S_u(x_0,h)}U^{ij}D_ivD_jv\,\mathrm{d}x\bigg)^{1/2}.
	\end{eqnarray*}
\end{lem}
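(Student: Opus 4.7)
The plan is to reduce the inequality by an affine normalization to a section of unit size, and then to establish the weak $(1,2)$-Poincar\'e inequality on this standardized domain by a compactness argument combined with the Monge-Amp\`ere Sobolev inequality from Lemma~\ref{lem:Sobolev inequality}.

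For the normalization, since $u$ satisfies \eqref{condition}, the section $S:=S_u(x_0,h)$ is bounded and convex, so John's lemma supplies an affine map $T$ with $B_{\rho}\subset T(S)\subset B_{\rho^{-1}}$ for some $\rho=\rho(n,\lambda,\Lambda)>0$. Setting
\[
\tilde u(y):=h^{-1}\bigl[u(T^{-1}y)-u(x_0)-Du(x_0)\cdot(T^{-1}y-x_0)-h\bigr], \qquad \tilde v(y):=v(T^{-1}y),
\]
one checks that $\tilde u$ vanishes on $\partial \tilde S$ with $\tilde S:=T(S)$ and that $\det D^2\tilde u\in[\lambda',\Lambda']$ with new constants depending only on $n,\lambda,\Lambda$. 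Using $\tilde U=h^{1-n}|\det T|^{-2}\,TUT^{\mathrm T}$ together with $|S|\asymp h^{n/2}$, a change of variables yields the scaling identities
\[
\fint_S|v-v_{x_0,h}|\,\mathrm{d}x=\fint_{\tilde S}|\tilde v-\tilde v_{\tilde S}|\,\mathrm{d}y,\qquad \fint_S U^{ij}D_ivD_jv\,\mathrm{d}x=h^{-1}\fint_{\tilde S}\tilde U^{ij}D_i\tilde vD_j\tilde v\,\mathrm{d}y,
\]
so the factor $h^{1/2}$ on the right of the target inequality is exactly absorbed by the rescaling, and it suffices to prove the $h=1$ version on any normalized section $\tilde S$.

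For the normalized problem I would argue by contradiction. If no uniform $C$ works, there exist sequences $(\tilde u_k,\tilde v_k,\tilde S_k)$ with $\det D^2\tilde u_k\in[\lambda',\Lambda']$, $B_\rho\subset\tilde S_k\subset B_{\rho^{-1}}$, $\fint_{\tilde S_k}|\tilde v_k-(\tilde v_k)_{\tilde S_k}|\,\mathrm{d}y=1$, and weighted Dirichlet energy converging to zero. Caffarelli's compactness theory for normalized Monge-Amp\`ere solutions provides $\tilde u_k\to\tilde u_\infty$ in $C^{1,\alpha}_{\mathrm{loc}}(\tilde S_\infty)$ with Hausdorff convergence $\tilde S_k\to\tilde S_\infty$. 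On any compactly contained subdomain $K\Subset\tilde S_\infty$, the cofactor matrices $\tilde U_k$ converge weakly to $\tilde U_\infty$, which is pointwise positive-definite since $\det D^2\tilde u_\infty\ge\lambda'$; lower semicontinuity of the weighted energy then forces the limit $\tilde v_\infty$ to be constant on $K$.

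The main obstacle is to upgrade this interior rigidity into a contradiction with the $L^1$-normalization on all of $\tilde S_k$: in principle the entire oscillation of $\tilde v_k$ could concentrate in a thin neighborhood of $\partial\tilde S_k$, where $\tilde U_k$ may degenerate and the preceding argument does not reach. To rule this out I would invoke the Haj\l asz-Koskela self-improvement machinery on doubling quasi-metric measure spaces (the family of sections of $u$ is doubling and satisfies the Caffarelli-Gutierrez engulfing property under \eqref{condition}), using Lemma~\ref{lem:Sobolev inequality} as the required $(1,p^*)$-Sobolev input; alternatively, one can chain a Jerison-type covering by sections and apply Lemma~\ref{lem:Sobolev inequality} to $(\tilde v-c)\eta$ with a Monge-Amp\`ere-adapted cutoff $\eta$, controlling the boundary contribution by a geometric series along the chain. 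Either route closes the boundary gap and completes the contradiction.
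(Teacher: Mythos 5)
The paper does not actually prove this lemma: it is quoted directly from Maldonado \cite[Theorem 1.3]{Ma2}, whose proof rests on a careful analysis of the Monge-Amp\`ere quasi-metric/doubling structure of sections. Your affine normalization step is fine (the rescaling $\tilde u$, $\tilde v$ does reduce matters, up to constants depending on $n,\lambda,\Lambda$, to a normalized section, and the factor $h^{1/2}$ is absorbed as you say), but the core of your argument has genuine gaps. First, the compactness/contradiction scheme does not get off the ground: under \eqref{condition} alone the cofactor matrix $\tilde U_k$ has no uniform lower ellipticity bound (only $\det D^2\tilde u_k$ is pinched; $D^2\tilde u_k$ need not be bounded, so the smallest eigenvalue of $\tilde U_k$ can degenerate even in the interior). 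Hence the vanishing of $\int \tilde U_k^{ij}D_i\tilde v_kD_j\tilde v_k$ gives no uniform bound on $D\tilde v_k$ in any unweighted Lebesgue space, there is no Rellich-type compactness, and the limit $\tilde v_\infty$ used in your ``interior rigidity'' step is not justified; moreover, to contradict the normalization $\fint_{\tilde S_k}|\tilde v_k-(\tilde v_k)_{\tilde S_k}|\,\mathrm{d}y=1$ you would need strong $L^1$ convergence up to $\partial\tilde S_k$, which is precisely what is unavailable. In effect, the compactness you invoke is equivalent to the Poincar\'e-type control you are trying to prove.

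Second, the boundary obstacle you yourself identify is not closed by either of the two remedies you gesture at. The Haj\l asz--Koskela self-improvement machinery takes some Poincar\'e inequality (with mean subtracted) as input and improves its exponents; Lemma~\ref{lem:Sobolev inequality} is a Sobolev inequality for $W_0^{1,2}$ functions, i.e.\ for functions with zero boundary values, and cannot serve as that input. The alternative of applying Lemma~\ref{lem:Sobolev inequality} to $(\tilde v-c)\eta$ with a Monge-Amp\`ere-adapted cutoff produces the term $\int U^{ij}D_i\eta D_j\eta\,(\tilde v-c)^2$, and since $v$ is an arbitrary $W^{1,2}$ function (there is no equation, hence no Caccioppoli inequality), this term can only be absorbed by an estimate of Poincar\'e type on sections --- which is circular. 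So the claim that ``either route closes the boundary gap'' is an assertion rather than a proof; the honest route here is the one the paper takes, namely to invoke \cite[Theorem 1.3]{Ma2}, where the chaining is carried out using the engulfing and doubling properties of sections rather than a compactness argument.
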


In fact, according to \cite{Ma2}, the inequality holds when the Monge-Amp\`ere measure $\mu_u$ satisfies the doubling property \eqref{eq: doubling property}.

\subsection{The homogeneous linearized Monge-Amp\`ere equation}
Combining the crucial decay estimate of Caffarelli and Gutiérrez \cite{CG} with an argument analogous to that used in Theorem 4.8 of Caffarelli and Cabré \cite{CC}, we  can obtain local boundedness and the weak Harnack inequality for linearized Monge-Ampère equations.
\begin{lem}[Local boundedness]\label{lem:local boundedness}
    Let $u\in C^2(\Omega)$ be a convex function satisfying \eqref{condition} and $v\in W^{1,2}(\Omega)$ be a weak nonnegative subsolution to 
    \[-U^{ij}D_{ij}v=0\]
     in $\Omega$ with \eqref{condition}. Then for any section $S_u(x_0,h)\Subset \Omega$ there holds for any $p>0$ , $$\sup_{S_u(x_0,h/2)} v\le C\left(\fint_{S_u(x_0,h)}v^{p}\,\mathrm{d}x\right)^{1/p},$$ where $C>0$ depends only on $n$, $\lambda$, $\Lambda$ and $p$.
\end{lem}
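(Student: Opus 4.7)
The plan is to follow the two-step strategy hinted at by the authors: first establish an $L^\infty$-$L^{\epsilon_0}$ bound for some small $\epsilon_0 > 0$ via the Caffarelli-Gutiérrez decay estimate for nonnegative subsolutions of the homogeneous linearized Monge-Ampère equation, and then upgrade this to arbitrary $p > 0$ by an iteration/interpolation argument modeled on the proof of \cite[Theorem~4.8]{CC}. Under assumption \eqref{condition}, $\det D^2u$ satisfies the $\mathcal{A}_\infty$ condition, so the Caffarelli-Gutiérrez theory \cite{CG} delivers a power-type decay
\[
    \left|\{v > t\} \cap S_u(x_0, h/2)\right| \le C\, t^{-\epsilon_0} \int_{S_u(x_0, h)} v^{\epsilon_0}\,\mathrm{d}x, \qquad t > 0,
\]
with $\epsilon_0,\, C > 0$ depending only on $n$, $\lambda$, $\Lambda$. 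A standard passage from this weak-type estimate to a strong $L^\infty$-$L^{\epsilon_0}$ bound (carried out at a slightly enlarged scale and then absorbed through the doubling property of sections) yields
\[
    \sup_{S_u(x_0, h/2)} v \le C_0 \left(\fint_{S_u(x_0, h)} v^{\epsilon_0}\,\mathrm{d}x\right)^{1/\epsilon_0}.
\]

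For any $p \ge \epsilon_0$, Hölder's inequality on the right-hand side immediately delivers the conclusion, so the remaining task is the case $0 < p < \epsilon_0$. Here I would interpolate. Fix two nested sections $S_u(x_0, \rho) \subset S_u(x_0, \rho')$ with $h/2 \le \rho < \rho' \le h$. By rescaling the preliminary estimate above — covering $S_u(x_0, \rho)$ by finitely many sub-sections $S_u(y_i, t_i)$ with $S_u(y_i, 2 t_i) \subset S_u(x_0, \rho')$ using the engulfing and doubling properties of sections — one obtains
\[
    \sup_{S_u(x_0, \rho)} v \le C(\rho, \rho') \left(\fint_{S_u(x_0, \rho')} v^{\epsilon_0}\,\mathrm{d}x\right)^{1/\epsilon_0}.
\]
Inserting the pointwise bound $v^{\epsilon_0} \le \bigl(\sup_{S_u(x_0, \rho')} v\bigr)^{\epsilon_0 - p} v^{p}$ and applying Young's inequality with conjugate exponents $\epsilon_0/(\epsilon_0 - p)$ and $\epsilon_0/p$ yields a self-improving bound
\[
    \phi(\rho) \le \tfrac{1}{2}\phi(\rho') + C(\rho, \rho') \left(\fint_{S_u(x_0, h)} v^{p}\,\mathrm{d}x\right)^{1/p}, \qquad \phi(\rho) := \sup_{S_u(x_0, \rho)} v.
\]
A classical iteration lemma then absorbs the $\phi(\rho')$ term and produces $\phi(h/2) \le C \bigl(\fint_{S_u(x_0, h)} v^{p}\,\mathrm{d}x\bigr)^{1/p}$, as desired.

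The principal technical obstacle is carrying out the covering/rescaling step inside the interpolation argument in the section geometry rather than in Euclidean balls. Unlike balls, sections $S_u(x_0, \rho)$ do not obey a uniform scaling, so one must invoke the engulfing and doubling properties of \cite{CG} to produce the sub-section covering above, and then track the quantitative dependence of $C(\rho, \rho')$ on $\rho' - \rho$ as a controlled power, fine enough for Young's inequality to close the iteration. Once this is in place, the argument proceeds in direct analogy with the classical Euclidean case of \cite{CC}.
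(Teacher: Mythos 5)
Your proposal follows essentially the same route the paper itself indicates for this lemma (which it states without a detailed proof): the Caffarelli--Guti\'errez decay estimate combined with an argument analogous to \cite[Theorem 4.8]{CC}, transplanted to the section geometry via the engulfing and doubling properties, with the Young-inequality-plus-iteration-lemma step to pass from the exponent $\epsilon_0$ to arbitrary $p>0$. One caveat: the displayed inequality $|\{v>t\}\cap S_u(x_0,h/2)|\le C t^{-\epsilon_0}\int_{S_u(x_0,h)}v^{\epsilon_0}\,\mathrm{d}x$ is by itself just Chebyshev and carries no information from the equation; the substantive input from \cite{CG} is the critical density/power-decay estimate (for supersolutions built from $v$), which is what actually drives the ``standard passage'' to the $L^\infty$--$L^{\epsilon_0}$ bound, exactly as in the proof of \cite[Theorem 4.8]{CC}.
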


\begin{lem}[Weak Harnack inequality]\label{lem:weak Harnack}
    Let $u\in C^2(\Omega)$ be a convex function satisfying \eqref{condition} and $v\in W^{1,2}(\Omega)$ be a weak nonnegative supersolution to 
    \[-U^{ij}D_{ij}v=0\] 
    in $\Omega$ with \eqref{condition}. Then there exists $p_0>0$, $C>0$ such that for any section $S_u(x_0,h)\Subset \Omega$, it holds
    $$\left(\fint_{S_u(x_0,h)}v^{p_0}\,\mathrm{d}x\right)^{1/p_0}\le C \inf_{S_u(x_0,h/2)}v,$$ where $C,p_0$ depends only on $n$, $\lambda$, $\Lambda$.
\end{lem}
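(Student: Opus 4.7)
The plan is to mimic the Krylov--Safonov / Caffarelli--Cabr\'e argument \cite{CC} for the weak Harnack inequality of non-divergence form uniformly elliptic equations, with Euclidean balls systematically replaced by Monge--Amp\`ere sections $S_u(\cdot,\cdot)$. The single indispensable analytical input from the linearized Monge--Amp\`ere theory is the Caffarelli--Guti\'errez critical density (decay) estimate \cite{CG}, which here plays the role of the A.B.P./point-estimate step in \cite{CC}.

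First I would normalize: replacing $v$ by $v/(\inf_{S_u(x_0,h/2)} v + \varepsilon)$ and sending $\varepsilon\to 0$ at the end, it suffices to prove the weak-type distributional estimate
\begin{equation*}
\big|\{v>t\}\cap S_u(x_0,h)\big|\leq C\, t^{-p_0}\, |S_u(x_0,h)|,\qquad t\geq 1,
\end{equation*}
whenever $v\geq 0$ is a supersolution with $\inf_{S_u(x_0,h/2)} v\leq 1$. The layer-cake formula then converts this into the claimed $L^{p_0}$-bound on averages (for $p_0$ or any smaller positive exponent).

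To produce this power decay I combine two ingredients. The critical density lemma of \cite{CG} states that if a nonnegative supersolution satisfies $|\{v\geq 1\}\cap S_u(x,r)|\geq \mu |S_u(x,r)|$ for a fixed $\mu>0$, then $v\geq \delta>0$ on a fixed concentric subsection. This is paired with a Calder\'on--Zygmund-type stopping-time decomposition on sections, which is available because under \eqref{condition} the family $\{S_u\}$ endows $\Omega$ with a space-of-homogeneous-type structure (engulfing property plus doubling of $\mu_u$, both from \cite{CG}). Iterating on the resulting stopping sections upgrades the single-step density estimate into the geometric decay $|\{v>M^k\}\cap S_u(x_0,h)|\leq (1-\mu)^k |S_u(x_0,h)|$ for some $M>1$, which is exactly the $t^{-p_0}$ bound with $p_0=\log(1-\mu)^{-1}/\log M$ depending only on $n,\lambda,\Lambda$.

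The main technical obstacle I foresee is the geometric bookkeeping in the iteration: verifying that the stopping sections satisfy the nesting, engulfing, and near-disjointness properties required to apply the critical density lemma at every step, and that the resulting constants remain under control of $n,\lambda,\Lambda$ alone. Once the geometric facts from \cite{CG} are in place, the rest of the argument follows the Euclidean template of \cite{CC} essentially verbatim.
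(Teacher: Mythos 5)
Your proposal matches the paper's approach: the paper derives the weak Harnack inequality exactly by combining the Caffarelli--Guti\'errez critical density (decay) estimate with the argument of Theorem 4.8 in Caffarelli--Cabr\'e, transplanted from balls to sections, which is precisely the outline you give. The normalization, distributional decay via the section-based Calder\'on--Zygmund iteration, and the dependence of $p_0,C$ only on $n,\lambda,\Lambda$ are all consistent with the paper's (uncited-in-detail) proof.
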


\begin{thm}[Interior H\"older estimates for homogeneous linearized Monge-Amp\`ere eqautions \cite{CG}]\label{ho Holder}
    Let $u\in C^2(\Omega)$ be a convex function satisfying \eqref{condition} and $S_u(x_0,h_0)\Subset\Omega$. Let $v\in W_{loc}^{2,n}(S_u(x_0,h_0))\cap C(\overline{S_u(x_0,h_0)})$  be a solution to 
    \begin{equation*}
        U^{ij}D_{ij}v=0
    \end{equation*}in $S_u(x_0.h_0)$. Set 
    \[M:=\sup_{x,y\in S_u(x_0,h_0/2)}\left|Du(x)-Du(y)\right|.\] 
    For any $p>0$, there exists constants $\alpha=\alpha(n,\lambda,\Lambda)\in (0,1)$ and $C=C(n,p,\lambda,\Lambda)>0$ such that, for any $x,y\in S_u(x_0,h_0/2)$, \begin{equation*}
    \left|v(x)-v(y)\right|\le CM^{\alpha}h_0^{-\alpha}\Vert v\Vert_{L^{p}(S_u(x_0,h_0))}\left|x-y\right|^{\alpha}.    
    \end{equation*}
\end{thm}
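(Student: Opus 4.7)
The plan is to adapt the classical De Giorgi--Nash--Moser oscillation decay scheme to the intrinsic section geometry of $u$, using the weak Harnack inequality (Lemma~\ref{lem:weak Harnack}) and local boundedness (Lemma~\ref{lem:local boundedness}) as the analytic inputs, and then convert the resulting intrinsic H\"older estimate to a Euclidean one through the parameter $M$.

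For the first step, I would fix a section $S_u(x_1,h)\subset S_u(x_0,h_0/2)$ and set $M(h):=\sup_{S_u(x_1,h)} v$, $m(h):=\inf_{S_u(x_1,h)} v$, $\omega(h):=M(h)-m(h)$. Since $U^{ij}D_{ij}v=0$, both $M(h)-v$ and $v-m(h)$ are nonnegative solutions, hence supersolutions, on $S_u(x_1,h)$. The weak Harnack inequality applied to each gives
\begin{equation*}
\left(\fint_{S_u(x_1,h)}(M(h)-v)^{p_0}\,\mathrm{d}x\right)^{1/p_0}\le C\bigl(M(h)-M(h/2)\bigr),
\end{equation*}
and an analogous inequality for $v-m(h)$. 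Since $(M(h)-v)+(v-m(h))\equiv\omega(h)$, the pointwise bound $\omega(h)^{p_0}\le C_{p_0}\bigl((M(h)-v)^{p_0}+(v-m(h))^{p_0}\bigr)$ yields, after averaging and taking $p_0$-th roots, $\omega(h)\le C(\omega(h)-\omega(h/2))$. Rearranging produces the oscillation decay $\omega(h/2)\le \theta\,\omega(h)$ with some $\theta=\theta(n,\lambda,\Lambda)\in(0,1)$. Iterating the decay and controlling the initial oscillation $\omega(h_0)$ by Lemma~\ref{lem:local boundedness} applied to $v^{\pm}$ gives the intrinsic H\"older bound $\omega(h)\le C(h/h_0)^{\alpha}\|v\|_{L^p(S_u(x_0,h_0))}$ for $h\in(0,h_0]$, where $\alpha:=\log(1/\theta)/\log 2$.

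The second step converts the intrinsic estimate to a Euclidean one via $M$. For $x,y\in S_u(x_0,h_0/2)$ the segment $[x,y]$ lies in $S_u(x_0,h_0/2)$ by convexity of sections, and the fundamental theorem of calculus yields
\begin{equation*}
u(y)-u(x)-Du(x)\cdot(y-x)=\int_0^1\bigl(Du(x+t(y-x))-Du(x)\bigr)\cdot(y-x)\,\mathrm{d}t\le M\,|y-x|,
\end{equation*}
so that $y\in\overline{S_u(x,M|y-x|)}$. Applying the intrinsic H\"older estimate at $x_1=x$ with $h=M|y-x|$ then gives $|v(x)-v(y)|\le\omega(M|y-x|)\le C M^{\alpha}h_0^{-\alpha}|y-x|^{\alpha}\|v\|_{L^p(S_u(x_0,h_0))}$, which is the desired bound.

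The main technical obstacle is section-geometry bookkeeping: one needs the engulfing-type properties of sections under \eqref{condition} to iterate the weak Harnack inequality on a nested family of sections centered at an arbitrary $x_1\in S_u(x_0,h_0/2)$, and one must keep track of uniform inclusions $S_u(x_1,h)\subset S_u(x_0,h_0)$ so that the $L^p$-norm on the outer section controls every initial oscillation. Once the geometry is set up, the analytic arguments reduce to standard iteration on top of the Caffarelli--Guti\'errez weak Harnack inequality.
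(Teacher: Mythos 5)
Your argument is correct and is essentially the standard Caffarelli--Guti\'errez proof that the paper itself invokes by citation: the paper gives no proof of this theorem, but the ingredients you use (Lemma~\ref{lem:local boundedness}, Lemma~\ref{lem:weak Harnack}, oscillation decay on nested sections, and the conversion $y\in \overline{S_u(x,M|x-y|)}$ via the gradient oscillation $M$) are exactly the toolkit the paper sets up from \cite{CG}. The only points to tidy are routine: since $M(h)-v$ and $v-m(h)$ are nonnegative only on $S_u(x_1,h)$, the weak Harnack inequality must be applied on a strictly smaller section (so the decay runs, say, from $\omega(h)$ to $\omega(h/4)$, which only changes $\alpha$), and the local boundedness step produces the averaged $L^p$ norm, which matches the stated estimate up to the volume normalization of $S_u(x_0,h_0)$.
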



\subsection{H\"older continuity by growth of local integrals}
In the later sections, we will obtain the interior H\"older estimate of \eqref{eq: +- form} by Campanato type estimates on the growth of local integrals. To fit into the setting of Monge-Amp\`ere equation, we need to replace the
balls by sections of $u$.

\begin{thm}\label{thm: Campanato}
Let $\Omega\subset\mathbb{R}^n$ be a bounded convex domain and $u\in C^2(\Omega)$ be a convex function satisfying \eqref{condition} in $\Omega$. 
Suppose that there exist universal $M>0$ and $\alpha\in (0,1)$ such that $v\in L^1(\Omega)$ satisfies 
\[\int_{S_u(x,h)}|v(z)-v_{x,h}|\,\mathrm{d}z\le Mh^{\frac{n+\alpha}{2}}.	\]
Then $v$ is H\"older continuous. Furthermore, there exists $\gamma=\gamma(n,\lambda,\Lambda,\alpha)\in (0,1)$ such that for any $S_u(x_0,h_0)\subset S_u(x_0,4h_0)\Subset \Omega$ there holds \[\sup_{S_u(x_0,h_0)}|v|+\sup_{x\ne y\in S_u(x_0,h_0)}\frac{|v(x)-v(y)|}{|x-y|^{\gamma}}\le C\big(M+\Vert v\Vert_{L^{\infty}(\Omega)}\big)\] 
where $C=C(n,\alpha,\lambda,\Lambda,\Omega,h_0,\operatorname{diam}(S_u(x_0,4h_0)))>0$.
\end{thm}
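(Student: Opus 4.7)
The claim is a Campanato-type embedding, adapted from Euclidean balls to the sections of $u$. Under hypothesis \eqref{condition} the collection $\{S_u(x,h)\}$ carries the structure of a doubling quasi-metric space: the volume bound $|S_u(x,h)|\asymp h^{n/2}$, the doubling $|S_u(x,2h)|\le C|S_u(x,h)|$, and the Caffarelli--Guti\'errez engulfing property---there exists $\theta=\theta(\lambda,\Lambda)>1$ so that $y\in S_u(x,h)$ implies $S_u(x,h)\subset S_u(y,\theta h)$. Moreover, at points compactly contained in $\Omega$, sections are comparable to Euclidean balls of radius $h^{1/2}$, with comparability constants depending on the interior distance. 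This is where the quantitative dependence of the final constant on $h_0$ and $\operatorname{diam}(S_u(x_0,4h_0))$ enters.

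The first step is to build a continuous representative of $v$. Fix a Lebesgue point $x\in S_u(x_0,h_0)$, set $h_k=h_0/2^k$ and $a_k:=v_{x,h_k}$. Using $S_u(x,h_{k+1})\subset S_u(x,h_k)$, doubling, and the hypothesis,
\[
|a_{k+1}-a_k|\le\fint_{S_u(x,h_{k+1})}|v(z)-a_k|\,\mathrm{d}z\le\frac{C}{|S_u(x,h_k)|}\int_{S_u(x,h_k)}|v(z)-a_k|\,\mathrm{d}z\le CM h_k^{\alpha/2}.
\]
Summing the resulting geometric series shows $\{a_k\}$ is Cauchy with limit $v^*(x)$ satisfying $|a_k-v^*(x)|\le CMh_k^{\alpha/2}$. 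By Lebesgue differentiation on the doubling quasi-metric generated by sections, $v^*=v$ almost everywhere, so we replace $v$ by this representative. The $\sup|v|$ contribution in the claimed estimate is then trivially dominated by $\|v\|_{L^\infty(\Omega)}$.

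The second step passes from scale-by-scale control of section averages to a H\"older estimate in the Euclidean metric. For $x,y\in S_u(x_0,h_0)$ with $r:=|x-y|$ small, the interior ball--section comparability lets us choose $h\asymp r^2$ so that $y\in S_u(x,h)$; engulfing then gives $S_u(x,h)\subset S_u(y,\theta h)$. Splitting
\[
|v^*(x)-v^*(y)|\le|v^*(x)-v_{x,h}|+|v_{x,h}-v_{y,\theta h}|+|v_{y,\theta h}-v^*(y)|,
\]
the outer terms are $\le CMh^{\alpha/2}$ from Step~1. For the middle term, both averages are compared against $v_{y,\theta h}$ by integrating over $S_u(y,\theta h)\supset S_u(x,h)$ and invoking the hypothesis one more time, producing the same bound $CMh^{\alpha/2}$. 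Altogether $|v^*(x)-v^*(y)|\le CMr^{\alpha}$, giving the Hölder bound with exponent $\gamma$ comparable to $\alpha$ (possibly smaller, depending on how the ball--section comparability enters).

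The main obstacle is precisely this geometric bridge from the section height $h$ to the Euclidean distance $r=|x-y|$: one needs $B(x,ch^{1/2})\subset S_u(x,h)\subset B(x,Ch^{1/2})$ with constants that stay uniform for $x\in S_u(x_0,h_0)$ but degenerate as $x$ approaches $\partial\Omega$. Tracking this dependence carefully is what forces $C$ to depend on $h_0$ and $\operatorname{diam}(S_u(x_0,4h_0))$; once the geometric preliminaries are in place the iteration on geometrically shrinking sections is essentially the classical Campanato argument.
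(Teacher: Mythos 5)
Your overall architecture matches the paper's: a dyadic telescoping of section averages to produce a continuous representative with $|v_{x,h}-v(x)|\le CMh^{\alpha/2}$, followed by a comparison of the averages at two nearby points through overlapping sections (you use engulfing, the paper uses the existence of a common subsection $S_u(z,\eta h)\subset S_u(x_1,h)\cap S_u(x_2,h)$; these are interchangeable). The genuine gap is in your ``geometric bridge.'' You assert that for $x$ well inside $\Omega$ one has $B(x,ch^{1/2})\subset S_u(x,h)\subset B(x,Ch^{1/2})$ with constants controlled by the interior distance, and you use this to pick $h\asymp|x-y|^2$ and conclude an exponent ``comparable to $\alpha$.'' Under the sole hypothesis $\lambda\le\det D^2u\le\Lambda$ this two-sided comparability is false: sections are only comparable to John ellipsoids whose eccentricity is not controlled by $n,\lambda,\Lambda$ or by the distance to $\partial\Omega$ (think of $u=\tfrac{x_1^2}{2\epsilon}+\tfrac{\epsilon x_2^2}{2}$, $\det D^2u\equiv1$, whose sections are ellipses with axes $\sqrt{2\epsilon h}$ and $\sqrt{2h/\epsilon}$). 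So the step ``choose $h\asymp r^2$ so that $y\in S_u(x,h)$'' and the claimed exponent are not justified as written.

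What is actually available, and what the paper uses, is Caffarelli's interior $C^{1,\beta}$ estimate: if $y\in\partial S_u(x,h)$, i.e. $h=u(y)-u(x)-Du(x)\cdot(y-x)$, then $h\le C|x-y|^{1+\beta}$ with $\beta=\beta(n,\lambda,\Lambda)$, whence $h^{\alpha/2}\le C|x-y|^{\gamma}$ with $\gamma=\tfrac{\alpha}{2}(1+\beta)$. This is precisely why the theorem's exponent $\gamma$ depends on $\lambda,\Lambda$ and is in general strictly smaller than $\alpha$; an exponent ``comparable to $\alpha$'' cannot be expected. Two further points your sketch leaves open, both handled in the paper: (i) the case of pairs $x,y\in S_u(x_0,h_0)$ that are far in the section quasi-metric ($y\notin S_u(x,\delta h_0)$), where one uses the trivial bound $2\sup|v|$ together with $\delta h_0\le C|x-y|^{1+\beta}$ to absorb it into $C(M+h_0^{-\alpha/2}\|v\|_{L^\infty(\Omega)})|x-y|^{\gamma}$; and (ii) the containment of the sections you average over: for $x\in S_u(x_0,h_0)$ you may only take heights up to $\delta_0h_0$ (so that $S_u(x,\delta_0h_0)\subset S_u(x_0,2h_0)\Subset\Omega$), not up to $h_0$ as your choice $h_k=h_0/2^k$ suggests. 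With these replacements your argument becomes essentially the paper's proof.
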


\begin{proof}
The proof is similar to that of \cite[Theorem 3.1]{HL} or \cite[Appendix A (H3)]{FR}. Since we work with sections instead of Euclidean balls, we will use certain properties of sections associated with strictly convex functions satisfying condition \eqref{condition}.

Let $S_u(x_0,4h_0)\Subset\Omega$. For any $x\in S_u(x_0,h_0)$, by \cite[Theorem 5.13(iii)]{Le24} there exists $\delta_0=\delta_0(n,\lambda,\Lambda)>0$ such that $S_u(x,\delta_0h_0)\subset S_u(x_0,2h_0)\Subset\Omega$. Let $0<h_1<h_2\le \delta h_0$. Then for any $x\in S_u(x_0,h_0)$, we have $S_u(x,h_1),S_u(x,h_2)\Subset\Omega$. Note that 
	\[|v_{x,h_1}-v_{x,h_2}|\le |v(y)-v_{x,h_1}|+|v(y)-v_{x,h_2}|.\] 
	By integration with respect to $y$ in $S_{u}(x,h_1)$, we obtain 
	\begin{align}
		|v_{x,h_1}-v_{x,h_2}|&\le \frac{1}{|S_u(x,h_1)|}\bigg(\int_{S_u(x,h_1)}|v-v_{x,h_1}|+\int_{S_u(x,h_2)}|v-v_{x,h_2}|\bigg)\nonumber\\[4pt]
		&\le C(n)\Lambda^{1/2}h_1^{-n/2}\bigg(\int_{S_u(x,h_1)}|v-v_{x,h_1}|+\int_{S_u(x,h_2)}|v-v_{x,h_2}|\bigg)\label{eq:app1}\\[4pt]
		&\le C(n)M\Lambda^{1/2}h_1^{-n/2}\left(h_1^{\frac{n+\alpha}{2}}+h_2^{\frac{n+\alpha}{2}}\right).\nonumber
	\end{align}  
	
For any $h\le \delta h_0$, with $h_1=h/2^{i+1}$, $h_2=h/2^i$, we obtain 
\[|v_{x,2^{-(i+1)}h}-v_{x,2^{-i}h}|\le C(n,\Lambda)Mh^{\frac{\alpha}{2}}2^{-\frac{\alpha}{2}(i+1)}\] and therefore for $j<k$\[|v_{x,2^{-j}h}-v_{x,2^{-k}h}|\le C(n,\Lambda)Mh^{\frac{\alpha}{2}}\sum_{i=j}^{k-1}2^{-\frac{\alpha}{2}(i+1)}\le C(n,\Lambda,\alpha)Mh^{\frac{\alpha}{2}}2^{-j\alpha/2}.\] 
Then $\{v_{x,2^{-i}h}\}\subset\mathbb{R}$ is a Cauchy sequence, 
hence a convergent one. Its limit $\hat{v}(x_0)$ is independent of the choice of $h$, since \eqref{eq:app1} can be applied with $h_1=2^{-i}h$ and $h_2=2^{-i}h'$ whenever $0<h<h'\le \delta h_0$. Hence we obtain 
\[\hat{v}(x)=\lim\limits_{h\to 0}v_{x,h}\] 
with 
\begin{equation}
		|v_{x,h}-\hat{v}(x)|\le C(n,\Lambda,\alpha)Mh^{\alpha/2}\label{eq:app2}
	\end{equation} 
for any $0<h\le\delta h_0$.
	
Since $v_{x,h}\to v$ in $L^1(\Omega)$ as $h\to0^+$, by the Lebesgue theorem, $v=\hat{v}$ a.e. and \eqref{eq:app2} yields $v_{x,h}\to v(x)$ uniformly in $S_u(x_0,h_0)$. Since $x\mapsto v_{x,h}$ is continuous for any $h>0$, $v(x)$ is continuous. By \eqref{eq:app2} we have
\[|v(x)|\le CMh^{\alpha/2}+|v_{x,h}|\]
 for any $x\in S_u(x_0,h_0)$ and $h\le \delta h_0$. Hence $v$ is bounded in $S_u(x_0,h_0)$ with estimate 
\[\sup_{S_u(x_0,h_0)}|v|\le C\big(Mh_0^{\alpha/2}+\Vert v\Vert_{L^{\infty}(\Omega)}\big).\]
	
Finally, we prove that $v$ is H\"older continuous. Let $x_1,x_2\in S_u(x_0,h_0)$ such that $x_1\in \partial S_u(x_2,h)$ where $h\le \delta h_0$. By \cite[Theorem 5.31]{Le24}, there exists 
$\eta=\eta(n,\lambda,\Lambda)>0$ and $z\in S_u(x_1,h)$ such that 
\begin{equation}
S_u(z,\eta h)\subset S_u(x_1,h)\cap S_u(x_2,h).\label{eq:app3}
\end{equation}   
Then we have \[|v(x_1)-v(x_2)|\le |v(x_1)-v_{x_1,h}|+|v(x_2)-v_{x_2,h}|+|v_{x_1,h}-v_{x_2,h}|.\] 
The first two terms on the right side are estimated in \eqref{eq:app2}. For the last term we write 
\[|v_{x_1,h}-v_{x_2,h}|\le |v_{x_1,h}-v(\xi)|+|v_{x_2,h}-v(\xi)|\] 
and integrating with respect to $\xi$ over $S_u(x_1,h)\cap S_u(x_2,h)$, 
by \eqref{eq:app3} we have 
\begin{align*}
		|v_{x_1,h}-v_{x_2,h}|&\le \frac{1}{|S_u(z,\eta h)|}\bigg(\int_{S_u(x_1,h)}|v-v_{x_1,h}|+\int_{S_u(x_2,h)}|v-v_{x_2,h}|\bigg)\\[4pt]
		&\le C(n,\lambda,\Lambda)h^{-n/2}\bigg(\int_{S_u(x_1,h)}|v-v_{x_1,h}|+\int_{S_u(x_2,h)}|v-v_{x_2,h}|\bigg)\\[4pt]
		&\le C(n,\lambda,\Lambda)Mh^{\alpha/2}.
	\end{align*}

By Caffarelli's pointwise $C^{1,\alpha}$ estimate (see \cite[Theorem 5.18]{Le24}), there exists $\beta=\beta(n,\lambda,\Lambda)\in (0,1)$ such that \[h=u(x_1)-u(x_2)-Du(x_2)(x_1-x_2)\le 4C_1^{-(1+\beta)}|x_1-x_2|^{1+\beta}\] for some $C_1=C_1(n,\lambda,\Lambda,\operatorname{diam}(S_u(x_0,4h_0)))>0$.
Therefore, we have 
\begin{equation*}
|v(x_1)-v(x_2)|\le C(n,\lambda,\Lambda,\alpha)Mh^{\alpha/2}\le C(n,\lambda,\Lambda,\alpha)M|x_1-x_2|^{\gamma},
\end{equation*}
where $\gamma=\frac{\alpha}{2}(1+\beta)\in (0,1)$.
    
For $x_1\notin S_u(x_2,\delta h_0)$, we have 
\[\delta h_0\le u(x_1)-u(x_2)-Du(x_2)(x_1-x_2)\le C_2|x_1-x_2|^{1+\beta}. \]
 Then 
 \begin{align*}
        |v(x_1)-v(x_2)|&\le 2\sup_{S_u(x_0,h_0)}|v|\le C\big(M+h_0^{-\alpha/2}\Vert v\Vert_{L^{\infty}(\Omega)}\big)\cdot(\delta h_0)^{\alpha/2}\\
        &\le  C\big(M+h_0^{-\alpha/2}\Vert v\Vert_{L^{\infty}(\Omega)}\big)|x_1-x_2|^{\gamma},
    \end{align*}
  where $\gamma=\frac{\alpha}{2}(1+\beta)\in (0,1)$. This completes the proof.
\end{proof}
\subsection{Riesz potential}
Given a nonnegative Radon measure $\mu$ in $\Omega$ and $x_0\in\Omega$, we define the {\it (truncated) Riesz potential} with respect to $u$ as
    \begin{equation}
	I_u^{\mu}(x_0,h_0)=\int_0^{h_0} \frac{\mu(S_u(x_0,h))}{h^{\frac{n}{2}-1}}\frac{\mathrm{d}h}{h},\ \ 
\text{for }h_0>0\text{ such that }S_u(x_0,h_0)\Subset\Omega.	\label{eq:Riesz potential}
\end{equation}
The definition \eqref{eq:Riesz potential} is reasonable, but it is a bit different from the classical Riesz potential. For example, when $u=\frac{1}{2}|x|^2$, we know $S_u(x_0,h)=B(x_0,\sqrt{2h})$, and we have \begin{align*}
    I_u^{\mu}(x_0,h_0)&=\int_0^{h_0}\frac{\mu(S_u(x_0,h))}{h^{\frac{n}{2}-1}}\frac{\mathrm{d}h}{h}=2^{n/2}\int_0^{\sqrt{2h_0}}\frac{\mu (B(x_0,s))}{s^{n-2}}\frac{\mathrm{d}s}{s}=2^{n/2}I^{\mu}(x_0,\sqrt{2h_0}),
\end{align*} where we use a change of variable $h=\frac{1}{2}s^2$.
The following element property of the Riesz potential will also be used.
\begin{lem}\label{lem:sim of Riesz potential}
    Let $h_m=2^{-m}h_0$ for $m=0,1,\cdots$. Then there exists $C>0$ depending only on $n$ such that $$C^{-1}I_u^{\mu}(x_0,h_0)\le \sum_{m=0}^{\infty}h_m^{1-n/2}\mu(\overline{S_u(x_0,h_m)})\le CI_u^{\mu}(x_0,2h_0).$$
\end{lem}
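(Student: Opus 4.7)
The claim is a purely measure-theoretic statement about the Riesz potential and reduces to a standard dyadic comparison between the integral defining $I_u^\mu$ and the series on the right. The only inputs I would need are: (a) monotonicity of $h\mapsto\mu(S_u(x_0,h))$, which is immediate from the nestedness of sections; and (b) the strict-inequality definition of $S_u$, which ensures $\overline{S_u(x_0,h_m)}\subset S_u(x_0,h)$ for every $h>h_m$ by continuity of $u$. The plan is to decompose each integral along the dyadic partition induced by $\{h_m\}_{m\ge 0}$ with $h_m=2^{-m}h_0$ and compare term by term.

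For the first (lower) bound, I would write
\[
I_u^\mu(x_0,h_0)=\sum_{m=0}^{\infty}\int_{h_{m+1}}^{h_m}\frac{\mu(S_u(x_0,h))}{h^{n/2}}\,dh.
\]
On the interval $[h_{m+1},h_m]$, monotonicity gives $\mu(S_u(x_0,h))\le \mu(S_u(x_0,h_m))\le\mu(\overline{S_u(x_0,h_m)})$, while $h^{n/2}\ge h_{m+1}^{n/2}=2^{-n/2}h_m^{n/2}$. Since the interval has length $h_m/2$, each summand is bounded by $2^{n/2-1}h_m^{1-n/2}\mu(\overline{S_u(x_0,h_m)})$, and summing over $m$ produces the desired inequality with $C=2^{n/2-1}$.

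For the second (upper) bound, the observation $\overline{S_u(x_0,h_m)}\subset S_u(x_0,h)$ for $h>h_m$ is the key point: any $y\in\overline{S_u(x_0,h_m)}$ satisfies $u(y)-u(x_0)-Du(x_0)\cdot(y-x_0)\le h_m<h$. Hence on $(h_m,2h_m]$ we have $\mu(\overline{S_u(x_0,h_m)})\le\mu(S_u(x_0,h))$ and $h^{-n/2}\ge 2^{-n/2}h_m^{-n/2}$. Since the intervals $(h_m,2h_m]$, $m\ge 0$, are pairwise disjoint subsets of $(0,2h_0]$,
\[
I_u^\mu(x_0,2h_0)\ge\sum_{m=0}^\infty\int_{h_m}^{2h_m}\frac{\mu(S_u(x_0,h))}{h^{n/2}}\,dh\ge 2^{-n/2}\sum_{m=0}^\infty h_m^{1-n/2}\mu(\overline{S_u(x_0,h_m)}),
\]
giving the upper bound with $C=2^{n/2}$.

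There is no substantive obstacle in this lemma; the argument is entirely elementary, and the only subtlety is keeping track of which side of each inequality carries the closure. Using the strict inequality in the definition of $S_u$ handles this automatically, and is also the reason a mild enlargement ($h_0$ on the left, $2h_0$ on the right) appears in the statement.
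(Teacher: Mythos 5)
Your argument is correct, and it is exactly the standard dyadic comparison one would expect here: the paper states this lemma without proof as an elementary property of the Riesz potential, so there is no competing argument to compare against. Both directions check out, including the constants ($2^{n/2-1}$ and $2^{n/2}$), and your use of the closure inclusion $\overline{S_u(x_0,h_m)}\subset S_u(x_0,h)$ for $h>h_m$ (valid since $\overline{S_u(x_0,h_m)}\Subset\Omega$ under the standing assumption that the relevant sections are compactly contained, so continuity of $u$ applies) is precisely the point needed to handle the closed sections appearing in the sum.
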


\vskip 7pt

\section{potential estimate}\label{sec:potential}

\vskip 7pt

In this section, we prove the potential estimate, which can yield the $L^p$ to $L^{\infty}$ estimate. The proof is based on the Poisson modification technique developed by Trudinger and Wang \cite{TW1}, combined with an iteration method used by Kilpel\"ainen and Mal\'y \cite{KM}, both of which deal with quasilinear elliptic equations. In particular,  the Monge-Amp\`ere Sobolev inequality will be used in our proof. Throughout this section, we still assume $\Omega$ is a bounded domain in $\mathbb{R}^n$ with smooth boundary and $u\in C^2(\Omega)$ be a convex function satisfying \eqref{condition}. 

\begin{lem}\label{lem:key lemma}
    Let $u\in C^2(\Omega)$ be a convex function satisfying \eqref{condition}, $v\in W^{1,2}(\Omega)$ be a weak solution to \eqref{eq: div LMA} and $S_u(x_0,h_0)\Subset\Omega$. For any $p>0$, there exist $C_1>0$ depending only on $n$, $\lambda$, $\Lambda$, $\gamma$ and $C_2>0$ depending only on $n$, $p$, $\Lambda$, such that \begin{equation*}
        \frac{1}{\left|S_u(x_0,h_0/2)\right|^{1/\sigma}}\Vert v_{\pm}\Vert_{L^{\sigma,\infty}(S_u(x_0,h_0/2))}\le C_1 \left(\fint_{A}  v_{\pm}^{p}\,\mathrm{d}x\right)^{\frac{1}{p}}+C_2\left(h_0^{1-n/2}\mu_{\pm}(\overline{S_u(x_0,h_0)})\right),
    \end{equation*}
    where $\sigma=\frac{n}{n-2}$, $A=S_u(x_0,h_0)\setminus\overline{S_u(x_0,h_0/2)}$.
\end{lem}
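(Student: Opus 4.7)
By the symmetry $(v,\mu)\mapsto(-v,-\mu)$, which interchanges $v_\pm\leftrightarrow v_\mp$ and $\mu_\pm\leftrightarrow \mu_\mp$, it suffices to prove the inequality for $v_+$. The strategy is a Stampacchia-type truncation combined with the Monge-Amp\`ere Sobolev inequality (Lemma~\ref{lem:Sobolev inequality}); this first yields the estimate with $p=2$ on the right-hand side, after which a standard Moser-type iteration between nested sections extends the bound to arbitrary $p>0$.

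\textbf{Energy plus Sobolev step.} Set $w:=v_+$, and let $\zeta\in C_c^\infty(S_u(x_0,h_0))$ be a cutoff with $\zeta\equiv 1$ on $S_u(x_0,h_0/2)$ and the Monge-Amp\`ere gradient bound $U^{ij}D_i\zeta D_j\zeta\le C(n,\lambda,\Lambda)/h_0$ on the annulus $A=S_u(x_0,h_0)\setminus\overline{S_u(x_0,h_0/2)}$; such $\zeta$ is constructed by composing a one-dimensional bump with $u$, using the engulfing properties of sections under \eqref{condition}. For $k>0$ set $T_k(s):=\min(s,k)$ and test the weak form \eqref{eq: +- form} with $\varphi:=T_k(w)\zeta^2\in W_0^{1,2}(S_u(x_0,h_0))$. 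Since $\int\varphi\,d\mu_-\ge 0$ we may discard it; using $T_k(w)\le k$ on the $\mu_+$-term and Cauchy-Schwarz plus Young's inequality on the cross term gives the Caccioppoli-type bound
\begin{equation*}
\int U^{ij}D_iT_k(w)D_jT_k(w)\,\zeta^2\,dx \le \frac{C}{h_0}\int_A w^2\,dx + C\,k\,\mu_+(\overline{S_u(x_0,h_0)}).
\end{equation*}
Applying Lemma~\ref{lem:Sobolev inequality} to $T_k(w)\zeta\in W_0^{1,2}$ upgrades this to
\begin{equation*}
\|T_k(w)\zeta\|_{L^{2n/(n-2)}}^{2}\le \frac{C}{h_0}\int_A w^2\,dx+C\,k\,\mu_+(\overline{S_u(x_0,h_0)}).
\end{equation*}

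\textbf{Stampacchia level-set step.} For each $t>0$, choose $k=2t$ so that $\{T_{2t}(w)>t\}=\{w>t\}$. By Chebyshev's inequality applied in $S_u(x_0,h_0/2)$, the previous display yields
\begin{equation*}
t\,|\{w>t\}\cap S_u(x_0,h_0/2)|^{1/\sigma}\le \frac{Ch_0^{-1}\int_A w^2}{t}+C\,\mu_+(\overline{S_u(x_0,h_0)}).
\end{equation*}
Taking the minimum of this bound with the trivial one $t|\{w>t\}|^{1/\sigma}\le t|S_u(x_0,h_0/2)|^{1/\sigma}$ (which controls small $t$) and then the supremum over $t>0$ by elementary optimization (the crossover of the two bounds is the relevant point), we obtain
\begin{equation*}
\|w\|_{L^{\sigma,\infty}(S_u(x_0,h_0/2))} \le C\,|S_u(x_0,h_0/2)|^{1/(2\sigma)}\Bigl(h_0^{-1}\!\int_A w^2\Bigr)^{1/2}+C\,\mu_+(\overline{S_u(x_0,h_0)}).
\end{equation*}
Dividing by $|S_u(x_0,h_0/2)|^{1/\sigma}\sim h_0^{(n-2)/2}$ and using $|A|\sim h_0^{n/2}$ produces the target estimate with $p=2$.

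\textbf{From $p=2$ to general $p$, and main obstacle.} To reach arbitrary $p>0$, iterate the previous inequality between shrinking nested sections $S_u(x_0,(1/2+2^{-j-2})h_0)$, interpolating $L^2$-averages of $w$ on each layer between $L^{\sigma,\infty}$ and $L^p$ via Young's inequality and absorbing the resulting $L^{\sigma,\infty}$-terms into the left by a geometric-series argument, exactly in the spirit of the proof of Lemma~\ref{lem:local boundedness}. The main conceptual point of the argument is the Stampacchia choice $k=2t$ in the level-set step: it balances the energy and the measure contributions so that the measure-dependent term scales linearly in $t$, producing the correct weak $L^{\sigma,\infty}$ scaling with $\sigma=n/(n-2)$, which is precisely half of the Sobolev exponent $2n/(n-2)$. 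A direct strong $L^{2n/(n-2)}$-bound would require $\mu_+$ to lie in the dual of $W_0^{1,2}$ at the scale-invariant level, which is not assumed and is exactly what the Stampacchia truncation circumvents.
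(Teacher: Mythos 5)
Your overall scheme (truncate, test, apply the Monge--Amp\`ere Sobolev inequality, pass to a weak-type bound by optimizing in the level $t$) is in the right spirit, but two steps do not hold up as written. First, the Caccioppoli-type bound is not a consequence of ``Cauchy--Schwarz plus Young'': testing with $T_k(v_+)\zeta^2$ produces the cross term $2\int \zeta\, T_k(v_+)\,U^{ij}D_i v\, D_j\zeta\,\mathrm{d}x$, and on the set $\{v>k\}$ this equals $2k\int_{\{v>k\}}\zeta\, U^{ij}D_i v\,D_j\zeta\,\mathrm{d}x$, which involves the full gradient of $v$ where $D T_k(v_+)=0$; it cannot be absorbed into $\int\zeta^2U^{ij}D_iT_k(v_+)D_jT_k(v_+)\,\mathrm{d}x$, nor is it bounded by $\frac{C}{h_0}\int_A v_+^2$. (It could be handled with extra work, e.g.\ via the antiderivative $\Phi_k(v)=\int_0^{v}T_k(s_+)\,\mathrm{d}s$ and the divergence-free structure of $U$, but that is a different and more delicate argument than the one you state.) Second, and more fundamentally, your passage from $p=2$ to arbitrary $p>0$ cannot work in general dimension: the absorption scheme requires interpolating the $L^2$-average on each layer between $L^{p}$ and $L^{\sigma,\infty}$, which needs $p<2<\sigma$; but $\sigma=\frac{n}{n-2}\le 2$ for all $n\ge 4$, so $L^{\sigma,\infty}$ does not control $L^2$ and the geometric-series absorption collapses. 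Since the lemma is stated (and later used) for small $p$, this is a genuine gap, not a technicality.

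The paper's proof avoids both problems by a Poisson modification rather than a cutoff: it replaces $v$ in the annulus $A$ by the solution $w$ of $-D_j(U^{ij}D_iw)=-\mu_-$ in $A$ with $w=v$ outside, shows $w\le v$ a.e.\ in $A$ by a comparison argument, and proves that $\int U^{ij}D_iw\,D_j\varphi\,\mathrm{d}x\le 2\mu_+(\overline{S_u(x_0,h_0)})$ for all $0\le\varphi\le1$ in $W_0^{1,2}(S_u(x_0,3h_0/4))$. The Sobolev inequality is then applied only to truncations of $(w_+-l)_+$, where $l=\sup_{\partial S_u(x_0,3h_0/4)}w_+$, so the energy is controlled by $\mu_+$ alone (no annulus $L^2$-term and hence no cross term to absorb), while the boundary level $l$ is estimated by $C\big(\fint_A w_+^{p}\big)^{1/p}\le C\big(\fint_A v_+^{p}\big)^{1/p}$ for \emph{every} $p>0$ using the Caffarelli--Guti\'errez local boundedness for nonnegative subsolutions of the homogeneous equation (Lemma~\ref{lem:local boundedness}), which is exactly how the arbitrary exponent $p$ enters. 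If you want to salvage your route, you would need either to restrict to $p\ge 2$ (or $p\ge1$ after repairing the cross term) or to import a homogeneous-equation ingredient of this kind; as proposed, the general-$p$ claim does not follow.
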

\begin{proof}
The proof of is inspired by \cite{Ha}, in which a potential estimate for quasilinear elliptic equations with signed measure data was established. 
    We only show the estimate for consider $v_{+}$, since the case of $v_-$ is similar. 
    
    We modify $v$ by $w$ such that $w\in W_{\text{loc}}^{1,2}(\Omega)$ satisfy 
     \begin{equation*}
        \left\{
        \begin{aligned}
            -D_j(U^{ij}D_iw)&=-\mu_{-}&&\text{in }A\\[4pt]
            w&=v&&\text{in }\Omega\setminus A,
        \end{aligned}\right.
    \end{equation*}
   where   $$A:=S_u(x_0,h_0)\setminus\overline{S_u(x_0,h_0/2)}.$$
  
    Since $v-w\in W_0^{1,2}(A)$, for any $k>0$, we have 
    $$\eta:=\min\{\max\{v-w,-k\},0\}\in W_0^{1,2}(A)\cap L^{\infty}(A).$$ 
    Then 
    \begin{align*}
        0&\ge \langle\mu_+,\eta\rangle_A:=\int_A \eta \, \mathrm{d}\mu_+=\langle\mu_+-\mu_-+\mu_-,\eta\rangle_A\\[4pt]
        &=\int_{\{x\in A: -k<v(x)-w(x)<0\}}U^{ij}D_i(v-w)D_j(v-w)\,\mathrm{d}x\ge 0,
    \end{align*}
    which implies 
    \[\left|\{x\in A: -k<v(x)-w(x)<0\}\right|=0\] 
    for any $k>0$. Then we have $w(x)\le v(x)$ for a.e. $x\in A$.

    Now we claim that for any $\varphi\in W_0^{1,2}(S_u(x_0,3h_0/4))$, $0\le \varphi\le 1 $, we have   \begin{equation}
    	\int_{S_u(x_0,3h_0/4)}U^{ij}D_iwD_j\varphi \,\mathrm{d}x\le 2\mu_+(\overline{S_u(x_0,h_0)}).\label{claim:upper bdd}
    \end{equation} Indeed, we consider \[\phi:=H_k(v-w)\varphi\in W_0^{1,2}(A)\cap L^{\infty}(A),\] where $H_k(t)$ is truncated function defined by\[H_k(t):=\frac{1}{k}\min\{\max\{t,-k\},k\}\quad \text{for }k>0.\]
    Then we have \begin{align*}
        0&\le \langle \mu_+,\phi\rangle_A= \langle \mu_+-\mu_-+\mu_-,\phi\rangle_A\\
        &=\int_A U^{ij}D_i(v-w)D_j(H_k(v-w)\varphi)\,\mathrm{d}x\\
        &=\int_A U^{ij}D_i(v-w)D_j(H_k(v-w))\varphi\,\mathrm{d}x+\int_A U^{ij}D_i(v-w)H_k(v-w)D_j\varphi \,\mathrm{d}x.
    \end{align*}
    So 
    \begin{align*}
    	\int_A U^{ij}D_i(w-v)&H_k(v-w)D_j\varphi \,\mathrm{d}x
       \le  \int_A U^{ij}D_i(v-w)D_j(H_k(v-w))\varphi \,\mathrm{d}x\\
       &=\frac{1}{k}\int_{\{x\in A: v(x)-w(x)<k\}} U^{ij}D_i(v-w)D_j(v-w)\varphi\,\mathrm{d}x\\[5pt]
        &\le \frac{1}{k}\int_{\{x\in A: v(x)-w(x)<k\}} U^{ij}D_i(v-w)D_j(v-w)\,\mathrm{d}x\\[5pt]
        &=\langle \mu_+,H_k(v-w)\rangle_A\le \mu_+(A).
    \end{align*}
Note that for any $t>0$, $H_k(t)\to 1$ as $k\to 0$.    Letting $k\to 0$, by Lebesgue dominated convergence theorem, we obtain 
    \begin{eqnarray*}
    \int_{S_u(x_0,3h_0/4)}U^{ij}D_i(w-v)D_j\varphi\,\mathrm{d}x&\le& \int_A U^{ij}D_i(w-v)D_j\varphi \,\mathrm{d}x\\
    &\le& \mu_+(A)\le \mu_+(\overline{S_u(x_0,h_0)}).
    \end{eqnarray*} 
    Here we used $w=v$ in $\Omega\setminus A$, $A\subset \overline{S_u(x_0,h_0)}$ and $\varphi\in W_0^{1,2}(S_u(x_0,3h_0/4))$ .
   Combining the fact
    $$\int_{S_u(x_0,h_0)}U^{ij}D_ivD_j\varphi\,\mathrm{d}x=\langle\mu,\varphi\rangle_{S_u(x_0,3h_0/4)}\le \mu_+(S_u(x_0,3h_0/4))\le \mu_+(\overline{S_u(x_0,h_0)}),$$
    we obtain \eqref{claim:upper bdd} in our claim.

    Next we take the truncated function \[	\psi=\frac{1}{k}\min\{(w_+-l)_+,k\}\in W^{1,2}_0(S_u(x_0,3h_0/4))\quad \text{for }k>0, \] where \[l=\sup_{\partial S_u(x_0,3h_0/4)}w_+:=\inf\left\{l\in\mathbb{R}: (w_+-l)_+\in W_0^{1,2}(S_u(x_0,3h_0/4))\right\}.\] 
    It is clear that $0\leq \psi\leq 1$.
    Applying \eqref{claim:upper bdd}, we have 
    \begin{eqnarray*}
       &&\frac{1}{k}\int_{S_u(x_0,3h_0/4)}U^{ij}D_i(\min\{(w_+-l)_+,k\})D_j(\min\{(w_+-l)_+,k\})\,\mathrm{d}x
       \nonumber\\[4pt]
        &=&\frac{1}{k}\int_{S_u(x_0,3h_0/4)}U^{ij}D_iwD_j(\min\{(w_+-l)_+,k\})\,\mathrm{d}x\\[4pt]
        &=&\int_{S_u(x_0,3h_0/4)}U^{ij}D_iwD_j\psi \,\mathrm{d}x\le 2\mu_+(\overline{S_u(x_0,h_0)}).\nonumber
    \end{eqnarray*}
    By Lemma \ref{lem:Sobolev inequality} and definition of Lorentz norm, we have 
    \begin{align*}
        k|\{&x\in S_u(x_0,3h_0/4): (w_+-l)_+\ge k\}|^{\frac{n-2}{2n}}\\[4pt]&\leq \Vert \min\{(w_+-l)_+,k\}\Vert_{L^{\frac{2n}{n-2},\infty}(S_u(x_0,3h_0/4))}\\[4pt]
        &\leq C\left(\int_{S_u(x_0,3h_0/4)}U^{ij}D_i(\min\{(w_+-l)_+,k\})D_j(\min\{(w_+-l)_+,k\})\,\mathrm{d}x\right)^{1/2}.
    \end{align*} 
    Therefore we obtain $$k|\{x\in S_u(x_0,3h_0/4): (w_+-l)_+\ge k\}|^{\frac{n-2}{n}}\le C\mu_+(\overline{S_u(x_0,h_0)}).$$  Taking the supremum over $k>0$, we have\[\Vert(w_+-l)_+\Vert_{L^{\sigma,\infty}(S_u(x_0,h_0/2))}\le C\mu_+(\overline{S_u(x_0,h_0)}),\quad \text{where }\sigma=\frac{n}{n-2}.\]  
    Note that $0\le w_+\le (w_+-l)_++l$. By \eqref{property 1}, \eqref{property 2} and Lemma \ref{lem:local boundedness}, we have \begin{align*}
        \Vert w_+\Vert_{L^{\sigma,\infty}(S_u(x_0,h_0/2))}&\le \Vert(w_+-l)_++l\Vert_{L^{\sigma,\infty}(S_u(x_0,h_0/2))}\\
        & \le C\left(\Vert(w_+-l)_+\Vert_{L^{\sigma,\infty}(S_u(x_0,h_0/2))}+h_0^{\frac{n-2}{2}}l \right)\\[4pt]
        &\le C\Vert(w_+-l)_+\Vert_{L^{\sigma,\infty}(S_u(x_0,h_0/2))}+h_0^{\frac{n-2}{2}}C_1\left(\fint_{A}w_+^{p}\,\mathrm{d}x\right)^{1/p}\\[4pt]
        &\le C_2\mu_+(\overline{S_u(x_0,h_0)})+ C_1 h_0^{\frac{n-2}{2}}\left(\fint_{A}w_+^{p}\,\mathrm{d}x\right)^{1/p}.
    \end{align*}
    Since $v=w$ in $S_u(x_0,h_0/2)$ and $w\le v$ a.e. in $A$, we obtain the desired result.
\end{proof}

With the estimate in the lemma above, we can obtain Theorem  \ref{thm:potential est} by an iteration method in \cite{KM}.

\begin{proof}[Proof of Theorem \ref{thm:potential est}]
The proof is the same as in \cite{Ha}. For completeness, we still present it.
    It suffices to prove that \eqref{eq:potential est} holds for $v_+$, since $v_-$ is similar. Let $\theta\in(0,1)$ be a sufficiently small constant to be chosen later. For $m=0,1,2,\cdots$, we take $h_m=2^{-m}h_0$ and denote $S_m=S_u(x_0,h_m)$. Let $l_0=0$, and $$l_{m+1}=l_m+\frac{1}{(\theta|S_{m+1}|)^{1/\sigma}}\Vert(v-l_m)_+\Vert_{L^{\sigma,\infty}(S_{m+1})},$$ where $\sigma=\frac{n}{n-2}$. We have \begin{equation*}
        \Vert(v-l_{m-1})_+\Vert_{L^{\sigma,\infty}(S_{m})}=(l_m-l_{m-1})(\theta|S_m|)^{1/\sigma}\quad \text{for all }m\ge 1.
    \end{equation*}

    Now we claim that for any $m\ge 1$, there exists $C>0$ depending only on $n,\lambda$ and $\Lambda$ such that\begin{equation}
        l_{m+1}-l_m\leq C\theta^{2/n}(l_m-l_{m-1})+\frac{C}{\theta^{1/\sigma}}h_m^{1-n/2}\mu_+(\overline{S_m}).\label{claim1}
    \end{equation}
     If the claim \eqref{claim1} holds, we can take $\theta$ such that $C\theta^{2/n}<\frac{1}{2}$ and sum over $m=1,2,\cdots, M$, i.e. $$l_{M+1}-l_1\le \frac{1}{2}l_M+C\sum_{m=1}^Mh_m^{1-n/2}\mu_+(\overline{S_m}).$$ 
     By Lemma \ref{lem:key lemma}, we have \begin{align*}
         l_1=\frac{1}{\theta^{1/\sigma}}\frac{1}{|S_1|^{1/\sigma}}\Vert v_+\Vert_{L^{\sigma,\infty}(S_1)}\le \frac{1}{\theta^{1/\sigma}}\left[C\left(\fint_{S_0-\overline{S_1}}v_+^{p}\,\mathrm{d}x\right)^{1/p}+h_0^{1-n/2}\mu_+(\overline{S_0})\right]. 
     \end{align*} 
      By Lemma \ref{lem:sim of Riesz potential},  $l_m$ is bounded as $m\to\infty$ and converges to 
     $$l_{\infty}\le C\left(\fint_{S_0-\overline{S_1}}v_+^{p}\,\mathrm{d}x\right)^{1/p}+C \sum_{m=0}^{\infty}h_m^{1-n/2}\mu_+(\overline{S_m}).$$
         By definition of $l_m$, we have $$\frac{1}{|S_m|^{1/\sigma}}\Vert(v-l_{m-1})_+\Vert_{L^{\sigma,\infty}(S_m)}\le \theta^{1/\sigma}(l_m-l_{m-1})\to 0,$$ as $m\to \infty$.
     Note that \begin{eqnarray*}
         &&(l_m-l_{m-1})\left|\{x\in S_m: v(x)\ge l_m\}\right|^{1/\sigma}\\
         &=&(l_m-l_{m-1})\left|\{x\in S_m: (v(x)-l_{m-1})_+\ge l_m-l_{m-1}\}\right|^{1/\sigma}\\[4pt]
         &\le& \Vert(v-l_{m-1})_+\Vert_{L^{\sigma,\infty}(S_m)}=(l_m-l_{m-1})(\theta|S_m|)^{1/\sigma},
     \end{eqnarray*}
     where we may assume $l_m>l_{m-1}$. 
     So $$\left|\{x\in S_m: v(x)\ge l_m\}\right|^{1/\sigma}\le (\theta|S_m|)^{1/\sigma}.$$ 
     By H\"older inequality \eqref{property 3: Holder ineq} with $q=q_1=1$ and $q_2=\infty$, we have 
     \begin{align*}
         \int_{S_m}(v-l_m)_+\,\mathrm{d}x&\le \Vert \chi_{\{v\ge l_m\}}\Vert_{L^{n/2,1}(S_m)}\Vert(v-l_m)_+
         \Vert_{L^{\sigma,\infty}(S_m)}\\[4pt]
         &\le C \left|\{x\in S_m: v(x)\ge l_m\}\right|^{2/n}\cdot\Vert(v-l_m)_+\Vert_{L^{\sigma,\infty}(S_m)}\\[4pt]
         &\le (\theta|S_m|)^{2/n}\Vert(v-l_{m-1})_+\Vert_{L^{\sigma,\infty}(S_m)}.
     \end{align*} 
     So we obtain 
     \begin{equation}
       \fint_{S_m}v_+\,\mathrm{d}x -l_m\leq   \fint_{S_m}(v-l_m)_+\,\mathrm{d}x\le C\frac{1}{|S_m|^{1/\sigma}}\Vert(v-l_{m-1})_+\Vert_{L^{\sigma,\infty}(S_m)}\to 0,\label{local bdd}
     \end{equation}as $m\to \infty$. Hence we have 
     $$v_+(x_0)=\lim_{m\to\infty}\fint_{S_m}v_+\,\mathrm{d}x\le l_{\infty}\le C\left(\fint_{S_u(x_0,h_0)-\overline{S_u(x_0,h_0/2)}}v_+^{p}\,\mathrm{d}x\right)^{1/p}+CI_u^{\mu_+}(x_0,2h_0).$$

     Now it remains to show that \eqref{claim1} holds. We may assume $l_m>l_{m-1}$, otherwise $l_{m+1}=l_m$ and \eqref{claim1} holds obviously. 
     Applying  Lemma \ref{lem:key lemma} for $v-l_m$ with $\gamma=1$, by \eqref{local bdd}, we obtain \begin{align*}
         (l_{m+1}-l_m)\theta^{1/\sigma}&=\frac{1}{|S_{m+1}|^{1/\sigma}}\Vert(v-l_m)_+\Vert_{L^{\sigma,\infty}(S_{m+1})}\\[4pt]
         &\le C\fint_{S_m-\overline{S_{m+1}}}(v-l_m)_+\,\mathrm{d}x+ Ch_m^{1-n/2}\mu_+(\overline{S_m})\\[4pt]
         &\le C\left(\frac{\theta^{2/n}}{|S_m|^{1/\sigma}}\Vert(v-l_{m-1})_+\Vert_{L^{\sigma,\infty}(S_m)}+h_m^{1-n/2}\mu_+(\overline{S_m})\right)\\[4pt]
         &=C\theta(l_m-l_{m-1})+Ch_m^{1-n/2}\mu_+(\overline{S_m}).
     \end{align*}Then \eqref{claim1} follows.
\end{proof}

\begin{rem}
We make the following remarks regarding possible extensions of Theorem \ref{thm:potential est}:
\begin{enumerate}
    \item[(i)] The key ingredient in the proof of \eqref{eq:potential est} is the Sobolev inequality \eqref{eq:Sobolev}, which also holds for the complex linearized Monge-Amp\`ere equations; see \cite{WZ24}. Therefore, the potential estimate \eqref{eq:potential est} can be established in the complex setting by a similar argument.

    \item[(ii)] The Sobolev inequality \eqref{eq:Sobolev} is known to hold under the so-called doubling property of the Monge-Amp\`ere measure $\mu_u$ (see \cite{Ma1}), namely, there exist constants $C > 0$ and $\alpha \in (0,1)$ such that
    \begin{equation}\label{eq: doubling property}
        \mu_u(S_u(x,t)) \leq C \mu_u(\alpha S_u(x,t)),
    \end{equation}
    for every section $S_u(x,t)$, where $\alpha S(x,t)$ denotes the $\alpha$-dilation of the set $S(x,t)$ with respect to its center of mass. As a result, the structural condition \eqref{condition} on $u$ in Theorem \ref{thm:potential est} can be replaced by the doubling condition.
\end{enumerate}
\end{rem}
 Combining the potential estimate \eqref{eq:potential est}, one can obtain the $L^p$ to $L^{\infty}$ estimate below:
\begin{cor}\label{cor:Lp to Linfty est}
	Let $u\in C^2(\Omega)$ be a convex function satisfying \eqref{condition} and $v\in W^{1,2}(\Omega)$ be a weak solution to \eqref{eq: div LMA} in $\Omega$. Suppose that there exist $M>0$ and $\varepsilon>0$ such that 
	\begin{equation}\label{condition: growth of measure}
		|\mu|(S_u(x,h))\le M h^{\frac{n}{2}-1+\varepsilon}
	\end{equation} whenever $x\in \Omega$ such that $S_u(x,h)\subset\Omega$. Then for any $S_u(x_0,2h_0)\Subset\Omega$ and any $p>0$, there exists $C>0$ depending only on $n,\lambda,\Lambda,p,h_0$ such that 
    \begin{equation*}
		\Vert v\Vert_{L^{\infty}(S_u(x_0, h_0))}\le C\left(\Vert v\Vert_{L^p(S_u(x_0,2h_0)))}+Mh_0^{\varepsilon}\right)
	\end{equation*}
\end{cor}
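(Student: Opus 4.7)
The plan is to derive Corollary~\ref{cor:Lp to Linfty est} as an almost direct consequence of the pointwise potential estimate in Theorem~\ref{thm:potential est}, applied at every Lebesgue point of $v$ inside $S_u(x_0, h_0)$ at a single carefully chosen height $r \simeq h_0$, and then controlling the two resulting terms on the right-hand side using respectively the growth hypothesis \eqref{condition: growth of measure} and the crude inclusion $S_u(y,r)\subset S_u(x_0,2h_0)$.

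First I would fix a local reference height. By the section-inclusion property of \cite[Theorem 5.13]{Le24} (the same property invoked at the beginning of the proof of Theorem~\ref{thm: Campanato}), there exists $\delta_0 = \delta_0(n,\lambda,\Lambda)\in(0,1)$ such that for every $y\in S_u(x_0,h_0)$ one has $S_u(y,\delta_0 h_0)\subset S_u(x_0,2h_0)\Subset\Omega$. Setting $r:=\delta_0 h_0/2$, the hypothesis $S_u(y,2r)\Subset\Omega$ of Theorem~\ref{thm:potential est} is satisfied uniformly in $y$, and moreover the normalization \eqref{condition} gives $|S_u(y,r)| \simeq r^{n/2}\simeq h_0^{n/2}$ with constants depending only on $n,\lambda,\Lambda,\delta_0$.

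Next I would estimate the two terms in \eqref{eq:potential est} applied at $y$ with height $r$. Using $\mu_\pm\le|\mu|$ together with \eqref{condition: growth of measure},
\[
I_u^{\mu_\pm}(y,2r)=\int_0^{2r}\frac{\mu_\pm(S_u(y,h))}{h^{n/2-1}}\,\frac{\mathrm{d}h}{h}\le M\int_0^{2r}h^{\varepsilon-1}\,\mathrm{d}h=\frac{M(2r)^\varepsilon}{\varepsilon}\le C(n,\lambda,\Lambda,\varepsilon)\,M h_0^{\varepsilon}.
\]
Since $S_u(y,r)\setminus\overline{S_u(y,r/2)}\subset S_u(x_0,2h_0)$ and $|S_u(y,r)|\gtrsim h_0^{n/2}$,
\[
\left(\fint_{S_u(y,r)\setminus\overline{S_u(y,r/2)}} v_\pm^{p}\,\mathrm{d}x\right)^{1/p}\le \frac{C}{|S_u(y,r)|^{1/p}}\|v\|_{L^p(S_u(x_0,2h_0))}\le C(n,\lambda,\Lambda,p,h_0)\|v\|_{L^p(S_u(x_0,2h_0))}.
\]
Plugging both bounds into \eqref{eq:potential est} and applying the result to both $v_+$ and $v_-$ yields
\[
|v(y)|\le C\bigl(\|v\|_{L^p(S_u(x_0,2h_0))} + M h_0^{\varepsilon}\bigr)
\]
at every Lebesgue point $y$ of $v$ in $S_u(x_0,h_0)$. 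Taking the essential supremum over such $y$ gives the claim.

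There is no real obstacle here beyond the section-geometric step: one must ensure that the auxiliary height $r$ can be chosen uniformly of size $\simeq h_0$ so that all the local sections $S_u(y,2r)$ stay inside $S_u(x_0,2h_0)$ and have volumes comparable to $h_0^{n/2}$. This is precisely the engulfing/inclusion behavior of sections under \eqref{condition} used elsewhere in the paper, so everything else reduces to routine application of Theorem~\ref{thm:potential est}.
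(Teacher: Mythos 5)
Your proposal is correct and follows essentially the same route as the paper's own proof: apply the pointwise potential estimate of Theorem~\ref{thm:potential est} at each Lebesgue point $y\in S_u(x_0,h_0)$ with a uniform height $\simeq h_0$ furnished by the section-inclusion property of \cite[Theorem 5.13(iii)]{Le24}, bound the Riesz potential by $CMh_0^{\varepsilon}$ via \eqref{condition: growth of measure}, and absorb the local $L^p$ average into $\|v\|_{L^p(S_u(x_0,2h_0))}$ using the inclusion of the small section in $S_u(x_0,2h_0)$ and the volume bounds from \eqref{condition}. The only point stated loosely (in both your argument and the paper's) is that bounding the annulus average by the full $L^p$ norm divided by $|S_u(y,r)|$ tacitly uses a lower volume bound for $S_u(y,r)\setminus\overline{S_u(y,r/2)}$ of order $r^{n/2}$, which follows from standard section volume estimates after possibly shrinking the inner height by a universal factor.
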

\begin{proof}
	For any $x\in S_u(x_0,h_0)$, by \cite[Theorem 5.13(iii)]{Le24}, there exists constant $\delta_0=\delta_0(n,\lambda,\Lambda)>0$ such that $S_u(x,\delta_0h_0)\subset S_u(x_0,3h_0/2)\Subset\Omega$. Then for any Lebesgue point $y\in S_u(x_0,h_0)$, we have $S_u(y,\delta_0 h_0)\subset S_u(x_0,3h_0/2) $. Applying \eqref{eq:potential est} we have \begin{align*}
		|v(y)|&=v_+(y)+v_-(y)\\
		&\le C\left(\fint_{S_u(y,\delta_0 h_0)}v_+^p\,\mathrm{d}x\right)^{1/p}+C\left(\fint_{S_u(y,\delta_0h_0)}v_-^p\,\mathrm{d}x\right)^{1/p}+CI_u^{|\mu|}(y,2\delta_0h_0)\\
		&\le 2C\left(\fint_{S_u(x_0,3h_0/2)}(v_++v_-)^p\,\mathrm{d}x\right)^{1/p}+CMh_0^{\varepsilon}\\
		&\le C\left(h_0^{-\frac{n}{2p}}\Vert v\Vert_{L^p(S_u(x_0,2h_0))}+Mh_0^{\varepsilon}\right).
	\end{align*} 
	Then \[\Vert v\Vert_{L^{\infty}(S_u(x_0,h_0))}\le C\Vert v\Vert_{L^p(S_u(x_0,2h_0))}+CMh_0^{\varepsilon}.\]
	\end{proof}


\vskip 7pt
 \section{H\"older regularity}\label{sec:Holder1}

 In this section, we present proofs for the H\"older regularity of solutions to \eqref{eq: div LMA} and obtain Theorem~\ref{thm:divF+f}.
 \subsection{H\"older regularity when the right-hand side is a non-negative Radon measure} 
We first provide a proof of H\"older regularity for non-negative data by directly applying the potential estimate together with the Harnack inequality for the homogeneous equation. 

\begin{thm}\label{thm:Holder by potential est for +}
	Let $u\in C^2(\Omega)$ be a convex function satisfying \eqref{condition} and $\mu$ be a non-negative Radon measure. Suppose that there exist $M>0$ and $\varepsilon>0$ such that 
	\begin{equation*}
		\mu(S_u(x,h))\le M h^{\frac{n}{2}-1+\varepsilon}
	\end{equation*} whenever $x\in \Omega$ such that $S_u(x,h)\subset\Omega$. Then the solution $v$ 
	to \begin{equation}
	    -U^{ij}D_{ij}v=\mu\label{split 1}
	\end{equation} is H\"older continuous. Moreover, for any given section $S_u(x_0,2h_0)\Subset\Omega$, there exist constants $\gamma\in(0,1)$ depending only on $n$, $\varepsilon$, $\lambda$, $\Lambda$ and  $C>0$ depending only on $n$, $\varepsilon$, $M$, $\lambda$, $\Lambda$, $h_0$, $\operatorname{diam}(S_u(x_0,2h_0))$, $\Vert v\Vert_{L^{\infty}(\Omega)}$  such that \begin{equation*}
		|v(x)-v(y)|\leq C|x-y|^\gamma.
	\end{equation*}for any $x,y\in S_u(x_0,h_0)$. 
\end{thm}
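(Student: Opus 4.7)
The plan is to establish, for each $x \in S_u(x_0, h_0)$, a dyadic oscillation-decay estimate on nested sections around $x$,
\[
\omega(h) := \operatorname{osc}_{S_u(x,h)} v \le \theta \, \omega(2h) + C M h^{\varepsilon},
\]
with $\theta \in (0,1)$ depending only on $n, \lambda, \Lambda$, then iterate to obtain decay in the section scale, and finally convert to Euclidean H\"older regularity via Caffarelli's pointwise $C^{1,\alpha}$ estimate. By the nested-section property used in the proof of Theorem~\ref{thm: Campanato}, for each $x \in S_u(x_0, h_0)$ we may restrict attention to scales $h \le \delta_0 h_0/2$ so that $S_u(x, 2h) \Subset S_u(x_0, 2h_0) \Subset \Omega$.

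The core device is a Poisson-type decomposition on $S_u(x, 2h)$: write $v = v_1 + v_2$, where $v_1$ solves the homogeneous equation $-U^{ij}D_{ij} v_1 = 0$ with $v_1 = v$ on $\partial S_u(x, 2h)$, and $v_2 = v - v_1 \in W_0^{1,2}(S_u(x, 2h))$ solves $-U^{ij}D_{ij} v_2 = \mu$ with zero boundary data. Since $\mu \ge 0$, the maximum principle gives $v_2 \ge 0$. For the homogeneous piece $v_1$, Lemma~\ref{lem:local boundedness} combined with Lemma~\ref{lem:weak Harnack} (applied to both $v_1 - \inf v_1$ and $\sup v_1 - v_1$, each a non-negative solution of the homogeneous equation) produces the full Harnack inequality for solutions, from which a classical oscillation-decrease argument yields
\[
\operatorname{osc}_{S_u(x, h)} v_1 \le \theta \, \operatorname{osc}_{S_u(x, 2h)} v_1
\]
for some $\theta = \theta(n, \lambda, \Lambda) \in (0, 1)$.

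The technical heart lies in controlling $v_2$: we aim to show $\|v_2\|_{L^{\infty}(S_u(x, 2h))} \le CMh^{\varepsilon}$. This is where the potential estimate Theorem~\ref{thm:potential est} enters essentially. Since $\mu \ge 0$ forces $\mu_- = 0$, applying the estimate to $v_2$ at a point $y \in S_u(x, h)$ gives
\[
v_2(y) \le C \Bigl(\fint_{A_y} v_2^{p} \,\mathrm{d}z \Bigr)^{1/p} + C I_u^{\mu}(y, 2 h_y),
\]
and the hypothesis $\mu(S_u(\cdot, s)) \le M s^{n/2 - 1 + \varepsilon}$ immediately yields $I_u^{\mu}(y, 2 h_y) \le C M h^{\varepsilon}$ by direct integration. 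The annulus $L^p$-term is controlled by exploiting the zero boundary data of $v_2$: one extends $v_2$ by zero outside $S_u(x, 2h)$ and chooses $h_y$ so that the annulus $A_y$ reaches beyond $\partial S_u(x, 2h)$ where $v_2$ vanishes, reducing this term to lower order; alternatively, one invokes Green's function bounds for the LMA operator on $S_u(x, 2h)$, whose kernel has exactly the structure of the Riesz potential $I_u^{\mu}$. Combining the two bounds via $v = v_1 + v_2$,
\[
\omega(h) \le \operatorname{osc}_{S_u(x, h)} v_1 + 2 \|v_2\|_{\infty} \le \theta (\omega(2h) + 2 \|v_2\|_{\infty}) + 2 \|v_2\|_{\infty} \le \theta \, \omega(2h) + C' M h^{\varepsilon}.
\]

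Iterating the preceding estimate via the standard dyadic iteration lemma gives $\omega(h) \le C (h / h_0)^{\gamma} (\|v\|_{L^{\infty}(\Omega)} + M)$ for some $\gamma = \gamma(n, \lambda, \Lambda, \varepsilon) \in (0, 1)$. Finally, by Caffarelli's pointwise $C^{1,\alpha}$ estimate, for $y \in S_u(x, h)$ one has $h \le C |y - x|^{1 + \beta}$ for some $\beta = \beta(n, \lambda, \Lambda) > 0$; taking the minimal $h$ such that $y \in S_u(x, h)$ yields the Euclidean H\"older bound $|v(x) - v(y)| \le \omega(h) \le C |x - y|^{\gamma (1 + \beta)}$, which implies the theorem (with $\gamma$ renamed). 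The principal obstacle will be the bound $\|v_2\|_{\infty} \le C M h^{\varepsilon}$: taming the $L^{p}$-annulus term in the potential estimate through the zero-boundary data of $v_2$ requires care, since the zero-extension of $v_2$ across $\partial S_u(x, 2h)$ is, in general, only a subsolution in the distributional sense, and one must keep track of the boundary contribution when moving between the two scales $h_y$ and $h$.
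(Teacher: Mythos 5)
Your overall architecture (oscillation decay on sections, dyadic iteration, then conversion to Euclidean H\"older via Caffarelli's pointwise $C^{1,\alpha}$ estimate and the engulfing/section geometry) matches the paper, but the technical heart of your argument --- the bound $\|v_2\|_{L^{\infty}(S_u(x,2h))}\le CMh^{\varepsilon}$ for the Poisson-modified piece --- is left unproved, and neither of the two routes you sketch closes it. Theorem~\ref{thm:potential est} requires the function to be a weak solution of \eqref{eq: +- form} in a neighborhood of $S_u(y,2h_y)$; if you keep $S_u(y,2h_y)\Subset S_u(x,2h)$ the annulus term is $\bigl(\fint_{A_y} v_2^p\bigr)^{1/p}$ with $v_2$ not small there, so you only get $\|v_2\|_\infty\le C\|v_2\|_{L^p}+CMh^\varepsilon$, which is circular; if instead you let the annulus cross $\partial S_u(x,2h)$ by extending $v_2$ by zero, the extension is no longer a solution with datum $\mu$: it solves the equation with $\mu$ plus a nonnegative surface-type measure on $\partial S_u(x,2h)$ (the jump of the co-normal derivative $U^{ij}D_iv_2\,\nu_j$), whose Riesz potential $I_u$ at points $y$ near the boundary is not controlled by the hypothesis on $\mu$ without boundary gradient (or harmonic-measure) estimates that you have not supplied. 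The ``Green's function'' alternative is a genuinely different argument (essentially the Le--Nguyen route) and is only named, not carried out. So as written the proof has a gap precisely at the step you yourself flag as the principal obstacle.

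The paper avoids the decomposition altogether, and this is the fix you should adopt: since $\mu\ge 0$, for $h'=2\theta h$ (with $\theta$ the engulfing constant) the function $v-m_{h'}$, $m_{h'}=\inf_{S_u(x_0,h')}v$, is nonnegative, solves the \emph{same} measure equation \eqref{split 1}, and is a nonnegative supersolution of the homogeneous equation on $S_u(x_0,h')$. One applies Theorem~\ref{thm:potential est} directly to $v-m_{h'}$ at each $x\in S_u(x_0,h)$ (no boundary layer arises because no Dirichlet problem is solved), and then controls the resulting average term not by boundary data but by the weak Harnack inequality (Lemma~\ref{lem:weak Harnack}, with $p=p_0$), which gives $\bigl(\fint_{S_u(x,\delta_0h)}(v-m_{h'})^{p_0}\bigr)^{1/p_0}\le C(m_h-m_{h'})$. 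Together with $I_u^{\mu}(x,2\delta_0 h)\le CMh^{\varepsilon}$ this yields $M_h-m_{h'}\le C(m_h-m_{h'})+Ch^{\varepsilon}$, hence $\operatorname{osc}_{S_u(x_0,h)}v\le \beta\,\operatorname{osc}_{S_u(x_0,2\theta h)}v+h^{\varepsilon}$ with $\beta<1$, and the iteration and section-to-ball conversion proceed as in your final step. In short: your use of the potential estimate is the right idea, but it must be applied to $v$ shifted by its infimum on a larger section, with the weak Harnack inequality absorbing the $L^{p}$ term, rather than to a zero-boundary-value Poisson modification.
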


\begin{proof}
     For any $0<h\le 2h_0$, we denote \[M_h=\sup_{S_u(x_0,h)}v,\quad \quad m_h=\inf_{S_u(x_0,h)}v.\]  For any $x\in S_u(x_0,h)$, by \cite[Theorem 5.13(iii)]{Le24}, there exists $\delta_0=\delta_0(n,\lambda,\Lambda)>0$ such that $S_u(x_,\delta_0h)\subset S_u(x_0,3h/2)$. Let $h'>h$ be chosen later and apply \eqref{eq:potential est} to the solution $v-m_{h'}$ of \eqref{split 1} in $S_u(x,\delta_0 h)$ to obtain \[v(x)-m_{h'} \le C\left[\fint_{S_u(x,\delta_0 h)}(v(z)-m_{h'})^{p}\,\mathrm{d}z\right]^{1/p}+CI^{\mu}(x,2\delta_0h)\] for any $p>0$. By \cite[Theorem 5.28]{Le24}, there exists $\theta=\theta(n,\lambda,\Lambda)>3$ such that \[S_u(x_0,3h/2)\subset S_u(x,\theta h).\] Since $v-m_{h'}$ is a nonnegative supersolution of \[-U^{ij}D_{ij}v=0\] in $S_u(x_0,h')$, where $h':=2\theta h$ and $h$ is chosen such that $S_u(x_0,2\theta h)\Subset\Omega$, we can apply weak Harnack inequality (Lemma \ref{lem:weak Harnack}) to obtain  \begin{align*}
        \left(\fint_{S_u(x,\delta_0 h)}(v(z)-m_{h'})^{p}\,\mathrm{d}z\right)^{1/p}& \le C\left(\fint_{S_u(x,2\theta h)}(v(z)-m_{h'})^{p_0}\,\mathrm{d}z\right)^{1/p_0}\\[4pt]
        &\le C\left(\inf_{S_u(x,\theta h)}v-m_{h'}\right)\\[4pt]
        & \le C\left(\inf_{S_u(x_0,h)}v-m_{h'}\right)=C(m_{h}-m_{h'}),
    \end{align*}where we choose $p=p_0$ as in Lemma \ref{lem:weak Harnack}.

     Noting that \begin{align*}
       I^{\mu}(x,2\delta_0 h)=\int_0^{2\delta_0 h}\frac{\mu(S_u(x,\rho))}{\rho^{n/2-1}}\frac{\mathrm{d}\rho}{\rho}\le M\int_0^{2\delta_0h}\rho^{\varepsilon}\frac{\mathrm{d}\rho}{\rho}=Ch^{\varepsilon},
   \end{align*}we obtain that \[v(x)-m_{h'}\le C(m_h-m_{h'})+Ch^{\varepsilon}\quad \text{ for all }x\in S_u(x_0,h).\] i.e. \[M_h-m_{h'}\le C(m_h-m_{h'})+Ch^{\varepsilon},\] which yields \begin{align*}
       C(M_h-m_h)&\le (C-1)M_h-(C-1)m_{h'}+Ch^{\varepsilon}\\
       &\le (C-1)(M_h-m_{h'})+Ch^{\varepsilon}\\
       &\le (C-1)(M_{h'}-m_{h'})+Ch^{\varepsilon}.
   \end{align*} Hence for all $h\le h_0$, we have
      \begin{equation*}
       \operatorname{osc}_{S_u(x_0,h)}v\le \beta \operatorname{osc}_{S_u(x_0,2\theta h)}v+h^{\varepsilon}.
   \end{equation*} where $\beta=\frac{C-1}{C}<1$.
  Then by the well-known De Giorgi lemma on the iterations of monotone functions on the real
  interval $(0,h_0)$ (see \cite[Lemma 8.23]{GT}), we obtain \begin{equation*}
      \operatorname{osc}_{S_u(x_0,h)}v\le C(n,\lambda,\Lambda,M)\left(\frac{h}{h_0}\right)^{\gamma}(\Vert v\Vert_{L^{\infty}(\Omega)}+h_0^{\varepsilon})
  \end{equation*}
for some $\gamma\in(0,1)$ depending only on $n$, $\varepsilon$, $\lambda$ and $\Lambda$. Then applying the properties of sections (similar with \cite[Theorem 12.14]{Le24}), we can obtain that
  \begin{equation*}
      \left|v(x)-v(y)\right|\le C\left|x-y\right|^{\gamma},  \quad \forall x,y\in S_u(x_0,h_0) 
  \end{equation*}
  where $C>0$ depending only on $n$, $\varepsilon$, $M$, $\lambda$, $\Lambda$, $h_0$, $\operatorname{diam}(S_u(x_0,2h_0))$ and $\Vert v\Vert_{L^{\infty}(\Omega)}$.
  \end{proof}

\vskip 7pt

 \subsection{H\"older regularity when the right-hand side is a signed Radon measure} 
Prior to the proof, we recall a well-known iteration lemma that will be used in the argument.
\begin{lem}[{\cite[Lemma 3.4]{HL}}]\label{lem:iteration}
	Let $\phi(t)$ be a nonnegative and nondecreasing function on $[0,R]$. Suppose that
	$$\phi(\rho)\leq A\left[\left(\frac{\rho}{r}\right)^\alpha+\varepsilon\right]\phi(r)+Br^\beta$$
	for any $0<\rho\leq r\leq R$, with $A$, $B$, $\alpha$, $\beta$ nonnegative constants and $\beta<\alpha$. Then for any $\gamma\in(\beta,\alpha)$, there exists a constant $\varepsilon_0=\varepsilon_0(A,\alpha,\beta,\gamma)$ such that if $\varepsilon<\varepsilon_0$ we have for all $0<\rho\leq r\leq R$
	$$\phi(\rho)\leq c\left[\left(\frac{\rho}{r}\right)^{\gamma}\phi(r)+B\rho^\beta\right]$$
	where $c$ is a constant depending on $A$, $\alpha$, $\beta$, $\gamma$. In particular, for any $0<r\leq R$, we have
	$$\phi(r)\leq c\left(\frac{\phi(R)}{R^\gamma}r^\gamma+Br^\beta\right).$$
\end{lem}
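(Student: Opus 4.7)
The plan is to reduce the continuous hypothesis to a one-step contraction at a carefully chosen dyadic scale and then iterate. Since $\gamma<\alpha$, I can fix $\tau\in(0,1)$, depending only on $A,\alpha,\gamma$, so that $2A\tau^\alpha\le \tau^\gamma$, and set $\varepsilon_0:=\tau^\alpha$. Whenever $\varepsilon<\varepsilon_0$, substituting $\rho=\tau r$ into the assumed inequality yields
\[
\phi(\tau r)\le A(\tau^\alpha+\varepsilon)\phi(r)+Br^\beta\le 2A\tau^\alpha\phi(r)+Br^\beta\le \tau^\gamma\phi(r)+Br^\beta,
\]
a clean single-scale contraction valid for every $r\le R$. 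This is the only place where the hypothesis on $\varepsilon$ is used.

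\textbf{Iteration to dyadic scales.} Writing $a_k:=\phi(\tau^k r)$ and applying the one-step inequality with $r$ replaced by $\tau^k r$, I obtain the scalar recursion $a_{k+1}\le \tau^\gamma a_k+Br^\beta\tau^{k\beta}$. A routine induction gives
\[
a_k\le \tau^{k\gamma}\phi(r)+Br^\beta\sum_{j=0}^{k-1}\tau^{(k-1-j)\gamma+j\beta}=\tau^{k\gamma}\phi(r)+Br^\beta\tau^{(k-1)\gamma}\sum_{j=0}^{k-1}\tau^{-j(\gamma-\beta)},
\]
and since $\gamma>\beta$ the inner sum is bounded by $\tau^{-(k-1)(\gamma-\beta)}/(1-\tau^{\gamma-\beta})$. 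This collapses the bound to
\[
\phi(\tau^k r)\le \tau^{k\gamma}\phi(r)+C\,B\,(\tau^k r)^\beta,\qquad C=C(A,\alpha,\beta,\gamma).
\]

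\textbf{Interpolation to general $\rho$.} For arbitrary $\rho\in(0,r]$ I pick the unique $k\ge 0$ with $\tau^{k+1}r<\rho\le\tau^k r$, invoke the monotonicity of $\phi$, and use $\tau^k\le \tau^{-1}\rho/r$ to conclude
\[
\phi(\rho)\le \phi(\tau^k r)\le \tau^{-\gamma}\Big(\tfrac{\rho}{r}\Big)^\gamma\phi(r)+C\tau^{-\beta}B\rho^\beta.
\]
Absorbing $\tau^{-\gamma}$ and $\tau^{-\beta}$ into a constant $c=c(A,\alpha,\beta,\gamma)$ gives the first asserted inequality; the specialization $r=R$ yields the second.

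\textbf{Main obstacle.} No deep difficulty arises; the only delicate point is the simultaneous calibration of $\tau$ and $\varepsilon_0$. The condition $2A\tau^\alpha\le \tau^\gamma$ is equivalent to $\tau^{\alpha-\gamma}\le 1/(2A)$, and is realizable precisely because $\alpha>\gamma$, and once $\tau$ is fixed the explicit choice $\varepsilon_0=\tau^\alpha$ keeps the two calibrations compatible. Without the strict gap $\gamma<\alpha$ one could not absorb both the principal $A\tau^\alpha$ contribution and the additive $\varepsilon$ perturbation into a single dyadic contraction, so this gap is genuinely used.
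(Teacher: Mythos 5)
Your argument is correct and coincides with the standard proof of \cite[Lemma 3.4]{HL}, which the paper simply cites without reproducing: choose $\tau\in(0,1)$ with $2A\tau^{\alpha}\le\tau^{\gamma}$ (possible since $\gamma<\alpha$), set $\varepsilon_0=\tau^{\alpha}$, iterate the resulting one-step contraction along the scales $\tau^k r$, and pass to general $\rho$ by monotonicity. The calibration of $\tau$ and $\varepsilon_0$, the geometric-series bound using $\gamma>\beta$, and the interpolation step are all handled correctly, so there is nothing to add.
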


\vskip 10pt

To prove Theorem \ref{thm: Holder for signed data}, we will need the following lemma, which can be regarded as a variant of Harnack’s inequality for the homogeneous equation.
\begin{lem}\label{ite-lemma}
Let $u\in C^2(\Omega)$ be a convex function satisfying \eqref{condition}, and let $w$ be the solution to
 \[-U^{ij}D_{ij}w=0\]  
 in $S_u(x_0,h_0)\Subset\Omega$. Then there exists $\varepsilon'>0$ depending only on $n,\lambda,\Lambda$, and $C>0$ depending only on $n,\lambda,\Lambda,h_0,\operatorname{diam}(\Omega)$, such that
 \begin{equation}\label{eq:Dw}
        \int_{S_u(x_0,\rho)}U^{ij}D_iwD_jw\,\mathrm{d}x\leq C\left(\frac{\rho}{h}\right)^{\frac{n}{2}-1+\varepsilon'}\int_{S_u(x_0,h)}U^{ij}D_iwD_jw\,\mathrm{d}x
    \end{equation}
    for all $0<\rho<h\le h_0$.
\end{lem}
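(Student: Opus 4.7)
My plan is to establish the decay estimate via a Campanato-style argument combining three ingredients: an oscillation decay for $w$ coming from the Hölder regularity of Caffarelli-Gutiérrez, a Caccioppoli-type energy estimate with a cutoff tailored to the Monge-Ampère geometry, and an inverse Morrey-type inequality relating oscillation to gradient energy. For the first ingredient, I would iterate Theorem~\ref{ho Holder} (equivalently, the original oscillation-decay statement of Caffarelli-Gutiérrez) on successively shrinking sections $S_u(x_0,h/K^k)$ to obtain an exponent $\alpha'=\alpha'(n,\lambda,\Lambda)\in(0,1)$ and $C=C(n,\lambda,\Lambda)$ such that
\[
\operatorname{osc}_{S_u(x_0,\rho)} w \le C\bigl(\rho/h\bigr)^{\alpha'}\operatorname{osc}_{S_u(x_0,h)} w\qquad\text{for all } 0<\rho<h\le h_0.
\]

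For the Caccioppoli estimate, I would set $\ell(x):=u(x)-u(x_0)-Du(x_0)\cdot(x-x_0)$ and pick a smooth $\eta:[0,\infty)\to[0,1]$ with $\eta\equiv 1$ on $[0,1/2]$ and $\eta\equiv 0$ on $[1,\infty)$. The key point is to use the Monge-Ampère-adapted cutoff $\zeta(x):=\eta(\ell(x)/\rho)\in W_0^{1,2}(S_u(x_0,\rho))$, which satisfies $\zeta\equiv 1$ on $S_u(x_0,\rho/2)$ and $D_i\zeta=\rho^{-1}\eta'(\ell/\rho)\,\ell_i$. Testing the weak equation against $\zeta^2(w-w_{x_0,\rho})$ and applying Cauchy-Schwarz gives
\[
\int \zeta^2 U^{ij} D_iw\,D_jw\,\mathrm{d}x \le 4\int (w-w_{x_0,\rho})^2 U^{ij} D_i\zeta\,D_j\zeta\,\mathrm{d}x,
\]
while $\partial_j U^{ij}=0$ and integration by parts yield
\[
\int_{S_u(x_0,\rho)} U^{ij}\ell_i\ell_j\,\mathrm{d}x = n\int_{S_u(x_0,\rho)} (\rho-\ell)\det D^2 u\,\mathrm{d}x \le C\rho\,|S_u(x_0,\rho)| \le C\rho^{n/2+1}.
\]
Combining these with the bound $(w-w_{x_0,\rho})^2\le \operatorname{osc}^2_{S_u(x_0,\rho)} w$ on $\operatorname{supp}\zeta$, I obtain the Caccioppoli estimate
\[
\int_{S_u(x_0,\rho/2)} U^{ij} D_iw\,D_jw\,\mathrm{d}x \le C\rho^{n/2-1}\operatorname{osc}^2_{S_u(x_0,\rho)} w.
\]

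For the inverse Morrey inequality, since $w-w_{x_0,h/2}$ solves the homogeneous equation and $|w-w_{x_0,h/2}|$ is therefore a nonnegative subsolution, Lemma~\ref{lem:local boundedness} with $p=1$ together with the $(1,2)$-Poincaré inequality (Lemma~\ref{lem:Poincare}) and the volume bound $|S_u(x_0,h/2)|\ge ch^{n/2}$ yield
\[
\operatorname{osc}^2_{S_u(x_0,h/4)} w \le Ch^{1-n/2}\int_{S_u(x_0,h)} U^{ij}D_iw\,D_jw\,\mathrm{d}x.
\]
For $\rho\le h/8$, chaining the Caccioppoli inequality at radius $2\rho$, the oscillation decay between radii $2\rho$ and $h/4$, and the inverse Morrey inequality gives
\begin{align*}
\int_{S_u(x_0,\rho)} U^{ij} D_iw\,D_jw\,\mathrm{d}x
&\le C\rho^{n/2-1}\operatorname{osc}^2_{S_u(x_0,2\rho)} w \\
&\le C\rho^{n/2-1}(\rho/h)^{2\alpha'}\operatorname{osc}^2_{S_u(x_0,h/4)} w \\
&\le C(\rho/h)^{n/2-1+2\alpha'}\int_{S_u(x_0,h)} U^{ij}D_iw\,D_jw\,\mathrm{d}x,
\end{align*}
which is the claim with $\varepsilon'=2\alpha'$; for $h/8<\rho<h$ the inequality is immediate because $(\rho/h)^{n/2-1+\varepsilon'}$ is bounded below by an absolute constant and the integrand is nonnegative.

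The main obstacle is the Caccioppoli step, because the cofactor matrix $U$ is not uniformly elliptic and an arbitrary Euclidean cutoff will not yield a usable bound on $\int U^{ij}D_i\zeta\,D_j\zeta\,\mathrm{d}x$. The remedy I propose is to build the cutoff from the Monge-Ampère potential itself via $\zeta=\eta(\ell/\rho)$, so that $U^{ij}D_i\zeta\,D_j\zeta$ becomes a multiple of $U^{ij}\ell_i\ell_j$; the divergence-free identity $\partial_j U^{ij}=0$ then reduces this, via integration by parts, to a volume integral of $\det D^2 u$ that is controlled by~\eqref{condition}.
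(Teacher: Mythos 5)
Your argument is correct and is essentially the paper's proof: the crux in both is the Caccioppoli estimate with a cutoff built from the section-defining function of $u$, where the divergence-free identity $D_jU^{ij}=0$ reduces $\int U^{ij}D_i\zeta D_j\zeta\,\mathrm{d}x$ to $n\int \zeta\,\det D^2u\,\mathrm{d}x\le C\rho^{n/2+1}$, combined with Caffarelli--Guti\'errez oscillation/H\"older control of $w$ and the Monge--Amp\`ere Poincar\'e inequality. The only difference is cosmetic --- the paper normalizes $w$ to zero mean and uses the pointwise bound $|w(x)-w(0)|\le C|x|^{\alpha}\Vert w\Vert_{L^1}$ together with Lemma \ref{lem:Poincare}, whereas you chain a section-based oscillation decay with local boundedness and Poincar\'e; for that decay you should derive it from Lemmas \ref{lem:local boundedness} and \ref{lem:weak Harnack} (the Caffarelli--Guti\'errez Harnack inequality over sections) rather than from Theorem \ref{ho Holder} as literally stated, since the latter is phrased in terms of Euclidean distance.
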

\begin{proof}
By translation and dilation, we only need to consider $x_0=0$ and $h=1$. Restrict to the range $\rho\in(0,1/4]$, since \eqref{eq:Dw} is trivial for $\rho\in (1/4,1)$. We also assume that $u=2\rho$ on $\partial S_u(0,2\rho)$ by subtracting the support function. We may further assume that $\displaystyle\int_{S_u(0,1)}w\,\mathrm{d}x=0$ since the function $w-\displaystyle\fint_{S_u(0,1)}w\,\mathrm{d}x$ still solves $-U^{ij}D_{ij}w=0$. 
    Then by the weak (1,2)-Poincar\'e inequality (Lemma \ref{lem:Poincare}),
    $$\int_{S_u(0,1)}|w|\,\mathrm{d}x\leq C\bigg(\int_{S_u(0,1)}U^{ij}D_iwD_jw\,\mathrm{d}x\bigg)^{1/2}.$$
    Applying Theorem \ref{ho Holder} with $p=1$, there exists $\alpha\in(0,1)$, such that for any $x\in S_{u}(0,1/2)$,
    \[|w(x)-w(0)|^2\leq C|x|^{2\alpha}\cdot \|w\|_{L^1(S_u(0,1))}^2\leq C|x|^{2\alpha}\int_{S_u(0,1)}U^{ij}D_iwD_jw\,\mathrm{d}x.\]
    
    For any $0<\rho\leq 1/4$, we take $\varphi=\zeta^2(w-w(0))$, where $\zeta=2\rho-u$. Then by the Cauchy-Schwarz inequality, we have
    \begin{align*}
        0&=\int_{S_u(0,2\rho)}U^{ij}D_iw D_j\varphi\,\mathrm{d}x\\
        &=\int_{S_u(0,2\rho)}U^{ij}D_iw\cdot (\zeta^2D_jw+2\zeta D_j\zeta(w-w(0)))\,\mathrm{d}x\\[4pt]
        &\geq \int_{S_u(0,2\rho)}\zeta^2U^{ij}D_iwD_jw\,\mathrm{d}x-\frac{1}{2}\int_{S_u(0,2\rho)}\zeta^2U^{ij}D_iwD_jw\,\mathrm{d}x\\
        &\quad\quad\quad -C\int_{S_u(0,2\rho)}U^{ij}D_i\zeta D_j\zeta\cdot (w-w(0))^2\,\mathrm{d}x\\[4pt]
        &\geq \frac{1}{2}\int_{S_u(0,2\rho)}\zeta^2U^{ij}D_iwD_jw\,\mathrm{d}x-C\sup_{S_u(0,2\rho)}|w-w(0)|^2\int_{S_u(0,2\rho)}U^{ij}D_i\zeta D_j\zeta\,\mathrm{d}x.
    \end{align*}
    Note that
    \begin{align*}
        \int_{S_u(0,2\rho)}U^{ij}D_i\zeta D_j\zeta\,\mathrm{d}x&=-\int_{S_u(0,2\rho)}U^{ij}D_{ij}\zeta \,\zeta\,\mathrm{d}x=\int_{S_u(0,2\rho)}U^{ij}D_{ij}u \,\zeta\,\mathrm{d}x\\
        &=\int_{S_u(0,2\rho)}n\det D^2u\cdot \zeta\,\mathrm{d}x\leq C\rho^{n/2+1}.
    \end{align*}
    Hence we have
    \begin{align*}
        \int_{S_u(0,\rho)}U^{ij}D_iwD_jw\,\mathrm{d}x&\leq \frac{c}{\rho^2}\int_{S_u(0,2\rho)}\zeta^2U^{ij}D_iwD_jw\,\mathrm{d}x\\[4pt]
        &\leq C\rho^{n/2-1}\sup_{S_u(0,2\rho)}|w-w(0)|^2\\[4pt]
        &\leq C\rho^{\frac{n}{2}-1+\varepsilon'}\int_{S_u(0,1)}U^{ij}D_iwD_jw\,\mathrm{d}x.
    \end{align*}
    We obtain \eqref{eq:Dw}.
\end{proof}

\begin{proof}[Proof of Theorem~\ref{thm: Holder for signed data} ]
We compare $v$ with solutions to the homogeneous equation.  For all $h\leq h_0$, let $w$ be the solution to
    \begin{equation*}
        \left\{
        \begin{aligned}
            -U^{ij}D_{ij}w&=0&&\text{in  }     S_u(x_0,h),\\[4pt]
            w&=v&&\text{on  }\partial S_u(x_0,h).
        \end{aligned}\right.
    \end{equation*}
    Then we know that
    \begin{align}
        \int_{S_u(x_0,h)}&U^{ij}D_i(v-w)D_j(v-w)\,\mathrm{d}x\nonumber\\&=\int_{S_u(x_0,h)}U^{ij}D_ivD_j(v-w)\,\mathrm{d}x-\int_{S_u(x_0,h)}U^{ij}D_jwD_i(v-w)\,\mathrm{d}x\nonumber\\[4pt]
        &=\int_{S_u(x_0,h)}(v-w)\,\mathrm{d}\mu\label{eq:Dv-w}\\&\leq 2\|v\|_{L^\infty(S_u(x_0,h_0))}|\mu|(S_u(x_0,h))\nonumber\\
        &\leq 2\|v\|_{L^\infty(S_u(x_0,h_0))}M h^{\frac{n}{2}-1+\varepsilon}.\nonumber
    \end{align}
    
    Next, we show that
     \begin{equation}\label{eq:Dw<Dv}
    	\int_{S_u(x_0,h)}U^{ij}D_iwD_jw\,\mathrm{d}x\leq \int_{S_u(x_0,h)}U^{ij}D_ivD_jv\,\mathrm{d}x.
    \end{equation}
   Indeed, since
    $$0=\int_{S_u(x_0,h)}U^{ij}D_iwD_j(v-w)\,\mathrm{d}x,$$
    we obtain
    \begin{align*}
        \int_{S_u(x_0,h)}U^{ij}D_iwD_jw\,\mathrm{d}x&=\int_{S_u(x_0,h)}U^{ij}D_iwD_jv\,\mathrm{d}x\\
        &\leq \frac{1}{2}\int_{S_u(x_0,h)}U^{ij}D_iwD_jw\,\mathrm{d}x+\frac{1}{2}\int_{S_u(x_0,h)}U^{ij}D_ivD_jv\,\mathrm{d}x,
    \end{align*}
    which implies \eqref{eq:Dw<Dv}.
   
Write $v=w+(v-w)$. By \eqref{eq:Dw}, \eqref{eq:Dv-w} and \eqref{eq:Dw<Dv}, we have
    \begin{eqnarray*}
        &&\int_{S_u(x_0,\rho)}U^{ij}D_ivD_jv\,\mathrm{d}x\\
        &\leq& 2\left(\int_{S_u(x_0,\rho)}U^{ij}D_iwD_jw\,\mathrm{d}x+\int_{S_u(x_0,\rho)}U^{ij}D_i(v-w)D_j(v-w)\,\mathrm{d}x\right)\\[4pt]
        &\leq& C\left(\frac{\rho}{h}\right)^{\frac{n}{2}-1+\varepsilon'}\int_{S_u(x_0,h)}U^{ij}D_iwD_jw\,\mathrm{d}x+C\|v\|_{L^\infty(S_u(x_0,h_0))} h^{\frac{n}{2}-1+\varepsilon}\\[4pt]
        &\leq& C\left(\frac{\rho}{h}\right)^{\frac{n}{2}-1+\varepsilon'}\int_{S_u(x_0,h)}U^{ij}D_ivD_jv\,\mathrm{d}x+C\|v\|_{L^\infty(S_u(x_0,h_0))} h^{\frac{n}{2}-1+\varepsilon}.
    \end{eqnarray*}
    Then by Lemma \ref{lem:iteration}, there is
    $$\int_{S_u(x_0,\rho)}U^{ij}D_ivD_jv\,\mathrm{d}x\leq C\|v\|_{L^\infty(S_u(x_0,h_0))}\left(\frac{\rho}{h}\right)^{\frac{n}{2}-1+\varepsilon}$$
    for all $0<\rho<h\leq h_0$.
    By weak (1,2)-Poincar\'e inequality in Lemma \ref{lem:Poincare}, we know
    \begin{eqnarray*}
    \int_{S_u(x_0,\rho)}|v-v_{x_0,\rho}|\,\mathrm{d}x&\leq &C\rho^{\frac{1}{2}+\frac{n}{4}}\bigg(\int_{S_u(x_0,\rho)}U^{ij}D_ivD_jv\,\mathrm{d}x\bigg)^{1/2}\\
    &\leq& C\|v\|^{1/2}_{L^\infty(S_u(x_0,h_0))}\rho^{\frac{n+\varepsilon}{2}}.
    \end{eqnarray*}
By Theorem \ref{thm: Campanato} and Corollary~\ref{cor:Lp to Linfty est},  we have 
\begin{align*}
    |v(x)-v(y)|&\leq C\left(\|v\|_{L^\infty(S_u(x_0,h_0))}+M\right)|x-y|^\gamma\\
    &\le C\left(\|v\|_{L^p(S_u(x_0,h_0))}+M\right)|x-y|^\gamma
\end{align*}
   as desired.
\end{proof}

\subsection{Proof of Theorem~\ref{thm:divF+f}}
 We shall give the proof of Theorem \ref{thm:divF+f}, which depends on Theorem \ref{thm: Holder for signed data} and the following observation.
\begin{lem}\label{lem: area est}
    Let $X\subset\mathbb{R}^n$ be a bounded convex body with smooth boundary and $B_r(x_0)\subset X$ be a ball with radius $r$ in $X$. Then we have \begin{equation}\label{eq: area est}
        |\partial X|\le \frac{n |X|}{r}.
    \end{equation} 
\end{lem}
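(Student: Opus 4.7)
The plan is to apply the divergence theorem to the radial vector field $F(x) := x - x_0$, whose divergence is identically $n$. This converts the volume of $X$ into a boundary flux integral,
\[
n |X| \;=\; \int_X \operatorname{div} F \, \mathrm{d}x \;=\; \int_{\partial X} (y - x_0) \cdot \nu(y) \, \mathrm{d}S(y),
\]
where $\nu$ denotes the outward unit normal to $\partial X$. The smoothness assumption ensures that this identity applies without any regularization.

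The crux will then be the pointwise lower bound $(y - x_0) \cdot \nu(y) \ge r$ at every $y \in \partial X$, and this is the only place where the convexity of $X$ together with the inclusion $B_r(x_0) \subset X$ enters. Since $X$ is convex with smooth boundary, the tangent hyperplane at each $y \in \partial X$ is a supporting hyperplane of $X$, so $X$ lies entirely in the closed half-space $\{z : (z - y) \cdot \nu(y) \le 0\}$. Testing this inequality at the specific point $z = x_0 + r\,\nu(y)$, which lies in $B_r(x_0) \subset X$, yields
\[
\bigl(x_0 + r\,\nu(y) - y\bigr) \cdot \nu(y) \;=\; r - (y - x_0)\cdot\nu(y) \;\le\; 0,
\]
i.e. $(y - x_0) \cdot \nu(y) \ge r$.

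Substituting this pointwise estimate into the flux identity gives $n|X| \ge r\,|\partial X|$, which is exactly \eqref{eq: area est}. I do not expect any genuine obstacle here: the argument rests on two standard ingredients — the divergence theorem and the supporting-hyperplane characterization of convex bodies — and the pointwise bound on $(y - x_0)\cdot\nu(y)$ is really the only substantive step. If one wished to drop the smoothness hypothesis, the natural route would be to approximate $X$ from the inside by smooth convex bodies, but this is not needed under the stated assumptions.
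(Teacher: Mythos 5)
Your proposal is correct and follows essentially the same route as the paper: the divergence identity $n|X|=\int_{\partial X}(y-x_0)\cdot\nu\,\mathrm{d}S$ combined with the pointwise bound $(y-x_0)\cdot\nu(y)\ge r$ from the supporting-hyperplane property. You simply spell out the latter inequality in more detail than the paper, which asserts it with reference to a figure.
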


 \begin{figure}[!htbp]

\tikzset{every picture/.style={line width=0.75pt}} 

\begin{tikzpicture}[x=0.75pt,y=0.75pt,yscale=-1.1,xscale=1.1]

\draw [line width=1]    (126,234) -- (548.33,234) ;
\draw [shift={(551.33,234)}, rotate = 180] [color={rgb, 255:red, 0; green, 0; blue, 0 }  ][line width=1]    (14.21,-4.28) .. controls (9.04,-1.82) and (4.3,-0.39) .. (0,0) .. controls (4.3,0.39) and (9.04,1.82) .. (14.21,4.28)   ;
\draw [line width=1]    (195,290) -- (195,11.17) ;
\draw [shift={(195,8.17)}, rotate = 90] [color={rgb, 255:red, 0; green, 0; blue, 0 }  ][line width=1]    (14.21,-4.28) .. controls (9.04,-1.82) and (4.3,-0.39) .. (0,0) .. controls (4.3,0.39) and (9.04,1.82) .. (14.21,4.28)   ;
\draw  [line width=1]  (152.33,158.87) .. controls (152.33,124.24) and (230.01,96.17) .. (325.83,96.17) .. controls (421.65,96.17) and (499.33,124.24) .. (499.33,158.87) .. controls (499.33,193.51) and (421.65,221.58) .. (325.83,221.58) .. controls (230.01,221.58) and (152.33,193.51) .. (152.33,158.87) -- cycle ;
\draw  [line width=1]  (277.15,158.87) .. controls (277.15,131.99) and (298.94,110.19) .. (325.83,110.19) .. controls (352.72,110.19) and (374.52,131.99) .. (374.52,158.87) .. controls (374.52,185.76) and (352.72,207.56) .. (325.83,207.56) .. controls (298.94,207.56) and (277.15,185.76) .. (277.15,158.87) -- cycle ;
\draw [line width=1]    (325.83,158.87) -- (440.53,115.23) ;
\draw [shift={(443.33,114.17)}, rotate = 159.17] [color={rgb, 255:red, 0; green, 0; blue, 0 }  ][line width=1]    (14.21,-4.28) .. controls (9.04,-1.82) and (4.3,-0.39) .. (0,0) .. controls (4.3,0.39) and (9.04,1.82) .. (14.21,4.28)   ;
\draw [line width=1]    (443.33,114.17) -- (455.07,88.89) ;
\draw [shift={(456.33,86.17)}, rotate = 114.9] [color={rgb, 255:red, 0; green, 0; blue, 0 }  ][line width=1]    (14.21,-4.28) .. controls (9.04,-1.82) and (4.3,-0.39) .. (0,0) .. controls (4.3,0.39) and (9.04,1.82) .. (14.21,4.28)   ;
\draw [line width=1]    (326.33,160.17) -- (346.02,119.86) ;
\draw [shift={(347.33,117.17)}, rotate = 116.03] [color={rgb, 255:red, 0; green, 0; blue, 0 }  ][line width=1]    (14.21,-4.28) .. controls (9.04,-1.82) and (4.3,-0.39) .. (0,0) .. controls (4.3,0.39) and (9.04,1.82) .. (14.21,4.28)   ;
\draw [line width=1]  [dash pattern={on 5.63pt off 4.5pt}]  (349.33,81.17) -- (529.33,141.17) ;
\draw [line width=1]  [dash pattern={on 5.63pt off 4.5pt}]  (347.33,117.17) -- (363.33,84.17) ;

\draw (440,118) node [anchor=north west][inner sep=0.75pt]  [font=\large,xscale=0.75,yscale=0.75] [align=left] {$\displaystyle x$};
\draw (450,70) node [anchor=north west][inner sep=0.75pt]  [font=\large,xscale=0.75,yscale=0.75] [align=left] {$\displaystyle \mathbf{n}_{x}$};
\draw (308,157.33) node [anchor=north west][inner sep=0.75pt]  [font=\large,xscale=0.75,yscale=0.75] [align=left] {$\displaystyle x_{0}$};
\draw (204.67,176.33) node [anchor=north west][inner sep=0.75pt]  [font=\large,xscale=0.75,yscale=0.75] [align=left] {$\displaystyle X$};
\draw (323,130) node [anchor=north west][inner sep=0.75pt]  [font=\large,xscale=0.75,yscale=0.75] [align=left] {$\displaystyle r$};
\end{tikzpicture}
        \caption{Boundary area controlled by volume}
        \label{fig:area}
    \end{figure}
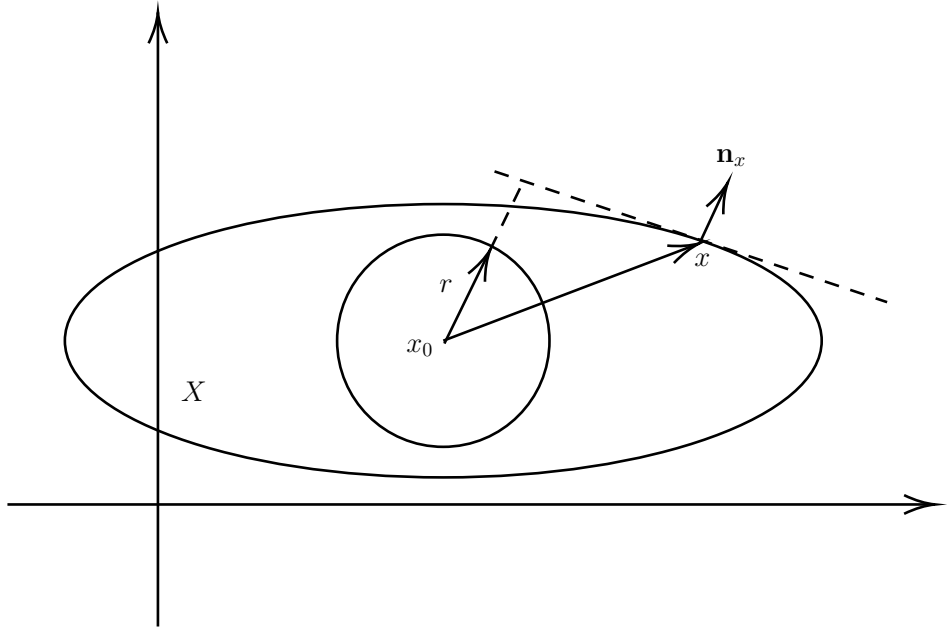
    \begin{proof}
        Note that \[ |X|=\frac{1}{n}\int_{\partial X}(x-x_0)\cdot\mathbf{n}_x\,\mathrm{d}S\ge \frac{1}{n}\int_{\partial X}r\,\mathrm{d}S=\frac{r|\partial X|}{n}, \] which yields \eqref{eq: area est}.
        See Figure \ref{fig:area} for the geometric interpretation.
    \end{proof}

    \begin{proof}[Proof of Theorem \ref{thm:divF+f}] 
    Denote $$\mu_F:=\operatorname{div}\mathbf{F}\,\mathrm{d}x\quad\text{and}\quad\mu_f:=f\,\mathrm{d}x.$$
    Given any section $S_u(x,h)\subset S_u(x_0,h_0)\Subset\Omega $, we claim that there exist universal $M>0$ and $\varepsilon>0$ such that \[ |\mu_F|(S_u(x,h))\le Mh^{\frac{n}{2}-1+\varepsilon}.\]
    By interior $C^{1,\alpha}$ estimates for Monge-Amp\`ere equation, we have  \[ S_u(x, h)\supset B_{ch^{\frac{1}{1+\alpha}}}(x)\] for some $c=c(n,\lambda,\Lambda,h_0,\operatorname{diam}(\Omega))>0$ and $\alpha=\alpha(n,\lambda,\Lambda)\in (0,1)$ (see \cite[Remark 5.23]{Le24}). Then by Lemma \ref{lem: area est}, we have \begin{equation}\label{eq: surface-area est}
        |\partial S_u(x,h)|\le \frac{n |S_u(x,h)|}{h^{\frac{1}{1+\alpha}}}\le Ch^{\frac{n}{2}-\frac{1}{1+\alpha}}=Ch^{\frac{n}{2}-1+\frac{\alpha}{1+\alpha}}.
    \end{equation} 
Since $\div F\ge 0$, by Gauss-Green formula for bounded divergence-measure vector fields that was proven in Chen-Torres \cite{CT} and Chen-Torres-Ziemer \cite{CTZ} (see also \cite{Si}), we have \begin{align*}
    |\mu_F|(S_u(x,h))&=\int_{S_u(x,h)}\div \mathbf{F}\,\mathrm{d}x=\int_{\partial S_u(x,h)}\mathbf{F}\cdot \nu\,\mathrm{d}S\\
    &\le \Vert \mathbf{F}\Vert_{L^{\infty}(\Omega)}|\partial S_u(x,h)|\\
    &\le C\Vert \mathbf{F}\Vert_{L^{\infty}(\Omega)}h^{\frac{n}{2}-1+\frac{\alpha}{1+\alpha}}.
\end{align*}
Combining \eqref{eq: f-growth}, we have \[|\mu|(S_u(x,h))\le |\mu_F|(S_u(x,h))+|\mu_f|(S_u(x,h))\le C\left(\Vert \mathbf{F}\Vert_{L^{\infty}(\Omega)}+\Vert f\Vert_{L^q}\right)h^{\frac{n}{2}-1+\varepsilon},\] where $\varepsilon=\min\left\{\frac{\alpha}{1+\alpha}, 1-\frac{n}{2q}\right\}>0$.
 Then by Theorem~\ref{thm: Holder for signed data}, we have \begin{align*}
     |v(x)-v(y)|&\le C\left(\Vert v\Vert_{L^{p}(S_u(x_0,2h_0))}+\Vert \mathbf{F}\Vert_{L^{\infty}(\Omega)}+\Vert f\Vert_{L^q(\Omega)}\right)|x-y|^{\gamma}
 \end{align*} as desired.
\end{proof}

\begin{rem}
(1) Without the assumption on the sign of  $\operatorname{div} \mathbf{F}$,   there are examples which fail to satisfy the growth condition \eqref{eq: mu growth} (see \cite[Proposition~5.1]{PT}). Consider 
$ \mathbf{F}(x):=\frac{x}{|x|}\cos\!\left(\frac{1}{|x|^{\varepsilon}}\right)$, 
where $0<\varepsilon<n-1$ is fixed. A direct computation gives  
\[
    \operatorname{div} \mathbf{F}(x)
    =\varepsilon |x|^{-1-\varepsilon}\sin(|x|^{-\varepsilon})
    +(n-1)|x|^{-1}\cos(|x|^{-\varepsilon}).
\] 
Let  
\[
    r_k=\left(\frac{\pi}{6}+2k\pi\right)^{-1/\varepsilon}, \qquad k=1,2,3,\dots.
\] 
One can check that  
\[
    \int_{B_{r_k}}(\operatorname{div} \mathbf{F})^+\,\mathrm{d}x
    \ge \frac{\omega_n \varepsilon}{14(n-1-\varepsilon)}r_k^{\,n-1-\varepsilon},
\] 
where $\omega_n$ denotes the surface area of the unit sphere in $\mathbb{R}^n$.  
For $n\ge 3$, take $\varepsilon=1$. Then there exists a sequence $r_k\to 0$ such that  
\[
    \int_{B_{r_k}}|\operatorname{div} \mathbf{F}|\,\mathrm{d}x
    \ge \int_{B_{r_k}}(\operatorname{div} \mathbf{F})^+\,\mathrm{d}x
    \ge C r_k^{\,n-2}.
\]
Hence, the condition \eqref{eq: mu growth} does not hold for general 
$\mathbf{F}\in L^{\infty}(\Omega;\mathbb{R}^n)$ in higher dimensions.

(2) It is easy to see that the non-negativity of $\operatorname{div} \mathbf{F}$ can be replaced by $\operatorname{div} \mathbf{F}$ is bounded from below.
\end{rem}

\vskip 7pt

\section{Application to Singular Abreu equations}
In this section, we use Theorem~\ref{thm:divF+f} to derive the interior estimates for the following singular Abreu equations:
\begin{equation}\label{Abreu-plaplace}
\left\{
 \begin{alignedat}{2}
U^{ij} D_{ij} w&=- \div (DF(Du)) + Q(x, u, Du)=:f(x, u, Du, D^2u) ~&&\text{\ in} ~\ \ \Omega, \\\
 w~&= (\det D^2 u)^{-1}~&&\text{\ in}~\ \ \Omega,
 \end{alignedat}
\right.
\end{equation}
where $U=(U^{ij})= (\det D^2 u) (D^2 u)^{-1}$, $F\in W^{2, n}_{\text{loc}}(\mathbb R^n)$ is a convex function, and $Q$ is a function defined on $\mathbb R^n\times \mathbb R\times \mathbb R^n$. 
When the right-hand side $f$ depends only on the independent variable, that is $f=f(x)$, \eqref{Abreu-plaplace} is the {\it Abreu equation} arising from the problem of finding extremal metrics on toric manifolds in K\"ahler geometry \cite{Ab}, and it is equivalent to
$$\sum_{i,j=1}^n\frac{\partial^2 u^{ij}}{\partial x_i\partial x_j}=f(x),$$
where $(u^{ij})$ is the inverse matrix of $D^2u$. 
The general form in \eqref{Abreu-plaplace} was introduced by Le in \cite{Le2} in the study of convex functionals with a convexity constraint related to the Rochet-Chon\'e model \cite{RC} for the monopolist's problem in economics, whose Lagrangian depends on the gradient variable.

More specifically, in the calculus of variations with a convexity constraint, one considers minimizers of convex functionals
\[ \int_{\Omega} F_0(x, u(x), Du(x)) \,\mathrm{d}x\]
among certain classes of convex competitors, where $F_0(x,z,\mathbf{p})$ is a function on $\overline{\Omega}\times \mathbb R\times \mathbb R^n$. 
One example is the Rochet-Chon\'e model with $q$-power ($q>1$) cost  
 \[F_{q,\gamma}(x,z,\mathbf{p})=\left(\frac{|\mathbf{p}|^q}{q}-x\cdot \mathbf{p}+z\right)\gamma(x),\] 
 where $\gamma$ is a nonnegative and Lipschitz function called the relative frequency of agents in the population.
 
Since it is in general difficult to handle the convexity constraint, especially in numerical computations, instead of analyzing these functionals directly, one might consider analyzing their perturbed versions by adding the penalization $-\varepsilon\int_\Omega \ln \det D^2u \,\mathrm{d}x$ which are convex functionals in the class of $C^2$, strictly convex functions. The heuristic idea is that the logarithm of the Hessian determinant should act as a good barrier for the convexity constraint.
Note that critical points, with respect to compactly supported variations, of the convex functional
\[ \int_{\Omega} F_0(x, u(x), Du(x)) \,\mathrm{d}x -\varepsilon\int_\Omega \ln \det D^2u \,\mathrm{d}x,\]
satisfy the Abreu-type equation
\[\varepsilon U^{ij} D_{ij} [(\det D^2 u)^{-1}]= -\sum_{i=1}^n \frac{\partial}{\partial x_i} \big(\frac{\partial F_0}{\partial p_i} (x, u, Du)\big) +  \frac{\partial F_0}{\partial z}(x, u, Du).\]
Here we denote $\mathbf{p} =(p_1,\ldots, p_n)\in\mathbb{R}^n$. 
In particular, for the Rochet-Chon\'e model with $q$-power ($q>1$) cost and unit frequency $\gamma\equiv 1$, that is, $F_0=F_{q,1}$, the above right-hand side is 
\[-\operatorname{div} (|Du|^{q-2} Du)+ n+1,\] which belongs to the class of right-hand sides considered in \eqref{Abreu-plaplace}. When $F_0(x, z, \mathbf{p})=F(\mathbf{p}) + \hat F(x, z)$ 
the above right-hand side becomes \[-\operatorname{div} (DF(Du)) + \frac{\partial \hat F}{\partial z}(x, u).\]

The Abreu type equations can be included in a class of fourth-order Monge-Amp\`ere type equations of the form
\begin{equation}\label{4-eq-g}
U^{ij}D_{ij}[g(\det D^2 u)]=f
\end{equation} 
where $g:(0,\infty)\rightarrow (0,\infty)$ is an invertible function. In particular, when $g(t)=t^{\theta}$, one can take
  $\theta=-1$ and  $\theta=-\frac{n+1}{n+2}$ to get the Abreu type equation and the {\it affine mean curvature} type equation \cite{Ch}, respectively.
  It is convenient to write (\ref{4-eq-g}) as a system of two equations for $u$ and $w=g(\det D^2 u)$. One is a Monge-Amp\`ere equation for the convex function $u$ in the form of 
 \begin{equation*}
 \det D^2 u=
g^{-1}(w)
\end{equation*}
 and other is the following linearized Monge-Amp\`ere equation for $w$:
\begin{equation*}
U^{ij} D_{ij }w=f.
\end{equation*}
The second order linear operator  $U^{ij}D_{ij}$ is the linearized Monge-Amp\`ere operator associated with the convex function $u$ because its coefficient matrix is the cofactor matrix of  $D^2 u$.
The regularity and solvability of equation \eqref{4-eq-g}, under suitable boundary conditions, are closely related to the regularity theory of the linearized Monge-Amp\`ere equation, which is connected to the results obtained in the previous sections. Therefore, we present it as an example illustrating our application.

In the following, we assume that lower and upper bounds for the determinant of the Hessian have already been established-this being one of the main challenges in the regularity theory of singular Abreu equations. For simplicity, we also assume regular conditions on the functions $F$ and $Q$. We remark that the result below is not new; for example, it can also be obtained using the transformation method developed in \cite{KLWZ}. Nonetheless, we still present it here to illustrate a straightforward application of Theorem~\ref{thm:divF+f}.

\begin{thm}
\label{inter-abreu}
Assume that $\Omega\subset\mathbb R^n$ is a uniformly convex domain with boundary $\partial\Omega\in C^3$. Let $F\in  W^{2, r}_{\text{loc}}(\mathbb R^n)$ be a concex function for some $r>n$, and let $Q\in L^s_{\text{loc}}(\mathbb R^n\times \mathbb R\times \mathbb R^n)$ where $s>n$. 
Assume that $u\in W^{4, s}(\Omega)$ is a uniformly convex solution to the singular Abreu equation \eqref{Abreu-plaplace}. Suppose that, for some positive constants $\lambda$, $\Lambda$, we have
\[\lambda \leq \det D^2 u  \leq \Lambda \quad \text{in}\quad \Omega.\]
\begin{enumerate}
\item[(i)]  Assume $F\in C^{2,\alpha_0}(\mathbb R^n)$ and $Q\in C^\alpha(\mathbb R^n\times \mathbb R\times \mathbb R^n)$ where $\alpha_0, \alpha\in (0, 1)$.
Then for any $\Omega'\Subset\Omega$, there exist 
constants $\beta, C >0 $ depending only on  $\alpha$, $\alpha_0$, $\lambda$, $\Lambda$, $n$,  $r$, $\|u\|_{L^\infty(\Omega)}$, $F$, $Q$, $\operatorname{dist}(\Omega',\partial\Omega)$ and the modulus of convexity of $u$ such that 
\[\|u\|_{C^{4,\beta}(\Omega')}\leq C. \]

\item[(ii)]
For any $\Omega'\Subset\Omega$, there exist 
constants $p, C >0$ depending only on  $\lambda$, $\Lambda$, $n$,  $r$, $s$, $\|u\|_{L^\infty(\Omega)}$, $F$, $Q$, $\operatorname{dist}(\Omega',\partial\Omega)$ and the modulus of convexity of $u$ such that 
\[\|u\|_{W^{4, p}(\Omega')}\leq C. \]
\end{enumerate}

\end{thm}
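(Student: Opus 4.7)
\medskip
\noindent\textbf{Proof proposal.}
The strategy is to recast the singular Abreu system \eqref{Abreu-plaplace} in the form of Theorem~\ref{thm:divF+f} and then bootstrap. Setting $w:=(\det D^2u)^{-1}$, $\mathbf{F}:=DF(Du)$, and $f:=-Q(x,u,Du)$, the first equation in \eqref{Abreu-plaplace} becomes
\[
-U^{ij}D_{ij}w=\operatorname{div}\mathbf{F}+f.
\]
The crucial observation is the sign condition
\[
\operatorname{div}\mathbf{F}(x)=\operatorname{tr}\bigl(D^2F(Du(x))\,D^2u(x)\bigr)\ge 0,
\]
which holds because $D^2F\ge 0$ (by convexity of $F$), $D^2u\ge 0$ (by convexity of $u$), and the trace of the product of two positive semidefinite matrices is nonnegative. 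Since $\lambda\le\det D^2u\le\Lambda$ and $u$ is convex with $u\in L^\infty(\Omega)$, interior gradient bounds for convex functions give $|Du|\le C$ on any $\Omega''\Subset\Omega$; combined with $F\in W^{2,r}_{\mathrm{loc}}\hookrightarrow C^{1}$ for $r>n$, this yields $\mathbf{F}\in L^\infty(\Omega'')$. The term $f=-Q(x,u,Du)$ lies in $L^{s}(\Omega'')$ with $s>n>n/2$, using the boundedness of $(u,Du)$ together with the assumed regularity on $Q$.

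Applying Theorem~\ref{thm:divF+f} together with $w\in L^\infty(\Omega)$ (since $w\le \lambda^{-1}$) then produces $w\in C^{\gamma}_{\mathrm{loc}}(\Omega)$ for some $\gamma\in(0,1)$, hence $\det D^2u=1/w\in C^{\gamma}_{\mathrm{loc}}(\Omega)$ and is bounded away from zero. By the uniform convexity of $u$ and Caffarelli's interior $C^{2,\alpha}$ theorem for Monge-Amp\`ere equations with H\"older right-hand side, I deduce $u\in C^{2,\tilde\gamma}_{\mathrm{loc}}(\Omega)$ for some $\tilde\gamma\in(0,1)$. In particular $U^{ij}\in C^{\tilde\gamma}_{\mathrm{loc}}(\Omega)$, so the linearized operator is uniformly elliptic with H\"older coefficients on compact subdomains.

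Bootstrapping then completes both parts. For part~(i), with $u\in C^{2,\tilde\gamma}$, $F\in C^{2,\alpha_0}$, and $Q\in C^\alpha$, the right-hand side $-\operatorname{div}(DF(Du))+Q(x,u,Du)=-F_{p_ip_j}(Du)\,u_{ij}+Q(x,u,Du)$ is H\"older continuous; classical interior Schauder estimates give $w\in C^{2,\beta'}_{\mathrm{loc}}$, hence $\det D^2u\in C^{2,\beta'}_{\mathrm{loc}}$, and Caffarelli's higher-order Schauder theory for Monge-Amp\`ere delivers $u\in C^{4,\beta}_{\mathrm{loc}}$. For part~(ii), from $u\in C^{2,\tilde\gamma}$ one obtains $F_{p_ip_j}(Du)\,u_{ij}\in L^r$ (using continuity of $Du$ to justify the pointwise composition with $F_{p_ip_j}\in L^r$) and $Q(x,u,Du)\in L^s$; classical interior $W^{2,p}$ estimates for uniformly elliptic equations with H\"older coefficients yield $w\in W^{2,\min(r,s)}_{\mathrm{loc}}$, from which $u\in W^{4,\min(r,s)}_{\mathrm{loc}}$ follows by Calder\'on-Zygmund/Schauder regularity for Monge-Amp\`ere.

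The principal obstacle is entering the regularity ladder at Step~1: the divergence term $\operatorname{div}(DF(Du))$ is a priori only in $H^{-1}$, so classical $L^p\to L^\infty$ estimates for the linearized Monge-Amp\`ere equation are insufficient in dimension $n\ge 3$. The convexity of $F$, through the sign condition $\operatorname{div}\mathbf{F}\ge 0$ identified above, is exactly the structural feature that makes Theorem~\ref{thm:divF+f} applicable; this is where the new result is indispensable. The remaining bootstrap is routine, though in part~(ii) some care is needed to justify the pointwise compositions in $F_{p_ip_j}(Du)$ and $Q(x,u,Du)$, which can be carried out by approximating $F$ and $Q$ by smooth functions and passing to the limit using the uniform estimates produced at each stage.
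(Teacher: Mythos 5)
Your proposal follows essentially the same route as the paper: rewrite the first equation as $-U^{ij}D_{ij}w=\operatorname{div}\mathbf{F}+f$ with $\mathbf{F}=DF(Du)$ bounded (convexity of $u$ gives local Lipschitz bounds, $W^{2,r}_{\mathrm{loc}}\hookrightarrow C^1$ for $r>n$), use convexity of $F$ and $u$ to get $\operatorname{div}\mathbf{F}=\operatorname{tr}(D^2F(Du)D^2u)\ge 0$, apply Theorem~\ref{thm:divF+f} to obtain local H\"older continuity of $w$, and then bootstrap via Caffarelli's Schauder/$W^{2,p}$ theory for the Monge--Amp\`ere equation exactly as the paper does (the paper defers these last steps to the cited references). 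The argument is correct, with your bootstrap discussion merely spelling out details the paper leaves to \cite{Le2, Le23, LZ, KLWZ}.
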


\begin{proof}[Sketch of the proof]
    By \cite{Le2, Le23, LZ, KLWZ}, it suffices to prove that $w$ is locally Hölder continuous. Once this is established, we can apply Caffarelli’s Schauder estimates for Monge-Ampère equations to deduce that $D^2 u$ is locally Hölder continuous. Consequently, the first equation in \eqref{Abreu-plaplace} becomes a uniformly elliptic equation with Hölder continuous coefficients, from which higher-order derivative estimates follow. 

    To establish the Hölder estimate for $w$, we observe that since $u$ is convex, it is locally Lipschitz. From the convexity of $F$ and $u$, we have \[\operatorname{div}(DF(Du))=\operatorname{tr}(D^2F(Du)D^2u)\ge 0.\] These imply that both $DF(Du(x))$ and $Q(x, u(x), Du(x))$ satisfy the assumptions of Theorem \ref{thm:divF+f}. Therefore, we can directly apply Theorem \ref{thm:divF+f} to conclude that $w \in C^{\alpha}(\Omega')$.
\end{proof}

\vspace{1em}
\noindent\textbf{Acknowledgments.} This research is partially supported by  National Key R$\&$D Program of China 2020YFA0712800, 2023YFA009900 and NSFC  Grant 12271008. Also, Ling Wang was funded by the European Union: the European Research Council (ERC), through StG “ANGEVA”, project number: 101076411. Views and opinions expressed are however those of the authors only and do not necessarily reflect those of the European Union or the European Research Council. Neither the European Union nor the granting authority can be held responsible for them. The authors would like to thank Professor Nam Le for many helpful comments.

\end{document}